\documentclass[11pt]{amsart}

\usepackage[T1]{fontenc}
\usepackage{lmodern}
\usepackage{geometry}
\usepackage{amsmath}
\usepackage{amssymb}
\usepackage{amsfonts}
\usepackage{mathrsfs}
\usepackage{enumitem}
\usepackage{graphicx}

\theoremstyle{plain}
\newtheorem{theorem}{Theorem}[section]
\newtheorem{lemma}[theorem]{Lemma}
\newtheorem{corollary}[theorem]{Corollary}

\theoremstyle{definition}
\newtheorem{definition}[theorem]{Definition}

\theoremstyle{remark}

\title{About Smooth and Non-Poor Subspaces of Daugavet Spaces}

\author{Samir Hamad}

\email{samih49@zedat.fu-berlin.de}

\date{April 2026}

\subjclass[2020]{46B04, 46B20, 46G05}
\keywords{Daugavet property}

\begin{document}

\begin{abstract}
We discuss an example of a non-complete normed space with the Daugavet property such that the norm is Gâteaux differentiable at every nonzero point. In contrast, we note that the dual norm of a normed space with the Daugavet property is not Gâteaux differentiable at any point. Furthermore, we show that quasilacunary Müntz spaces form a natural class of subspaces of $C[0,1]$, isomorphic to $c_0$, for which the corresponding quotient spaces fail to have the Daugavet property. At the same time, the slice diameter two property is preserved under this construction.
\end{abstract}

\maketitle

\section{Introduction}
We say that a normed space $X$ has the \emph{Daugavet property}, shortly $X \in \mathrm{DPr}$, if
\[
\|\mathrm{Id} + T\| = 1 + \|T\|,
\]
for every rank-one operator $T \colon X \to X$. By passing to adjoint operators, the following characterisation was obtained in \cite[Lemma~2.2]{kadets2000banach}.

\begin{theorem}
Let $X$ be a normed space. The following assertions are equivalent:
\begin{enumerate}
\item $X$ has the Daugavet property.
\item For every $y \in S_X$, $x^* \in S_{X^*}$, and every $\varepsilon > 0$, there exists $y^* \in S_{X^*}$ such that
\[
y^*(y) \ge 1 - \varepsilon
\quad \text{and} \quad
\|x^* + y^*\| \ge 2 - \varepsilon.
\]
\item For every $\varepsilon > 0$ and every $y \in S_X$, the closed convex hull of
\[
\{\, u \in (1+\varepsilon) B_X \mid \|y + u\| \ge 2 - \varepsilon \,\}
\]
contains $S_X$.
\end{enumerate}
\end{theorem}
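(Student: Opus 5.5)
The plan is to prove (1)$\Rightarrow$(2)$\Rightarrow$(3)$\Rightarrow$(1), working throughout with rank-one operators $T = x^*\otimes y$, $Tx = x^*(x)y$, and using that the completion $\widehat X$ has the same dual. Norms of operators and of functionals on $X$ agree with those of their extensions to $\widehat X$, so incompleteness causes no trouble.

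For (1)$\Rightarrow$(2), fix $y\in S_X$, $x^*\in S_{X^*}$, $\varepsilon>0$, and set $T=x^*\otimes y$. Then $\|T\|=1$, so (1) gives $\|\mathrm{Id}+T^*\| = \|\mathrm{Id}+T\| = 2$. Here $T^*z^* = z^*(y)\,x^*$. Pick $z^*\in S_{X^*}$ with
\[
\|z^* + z^*(y)x^*\| > 2-\varepsilon' .
\]
Since the left side is at most $1+|z^*(y)|$, we get $|z^*(y)|>1-\varepsilon'$. Multiplying $z^*$ by a unimodular scalar (which does not change the norm of $z^*+z^*(y)x^*$), we may assume $z^*(y)>1-\varepsilon'$ is real. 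Then
\[
\|z^*+x^*\| \ge \|z^*+z^*(y)x^*\| - (1-z^*(y)) > 2-2\varepsilon' .
\]
Taking $y^*=z^*$ and $\varepsilon'=\varepsilon/2$ gives (2).

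For (2)$\Rightarrow$(3), let $K$ be the closed convex hull in question and suppose some $x\in S_X$ lies outside $K$. By Hahn--Banach separation (valid in normed spaces) there is $x^*\in S_{X^*}$ with
\[
\operatorname{Re}x^*(x) > \sup_{u\in K}\operatorname{Re}x^*(u),
\]
so $\sup_K \operatorname{Re}x^* < 1$. Apply (2) to $y$, $x^*$ and a small $\delta$ to get $y^*$ with $y^*(y)\ge1-\delta$ and $\|x^*+y^*\|\ge 2-\delta$. Choose $v\in S_X$ with $\operatorname{Re}(x^*+y^*)(v) > 2-2\delta$. Then $\operatorname{Re}x^*(v)>1-2\delta$ and $\operatorname{Re}y^*(v)>1-2\delta$, hence
\[
\|y+v\|\ge \operatorname{Re}y^*(y+v) > 2-3\delta .
\]
With $3\delta\le\varepsilon$ this puts $v\in K$, yet $\operatorname{Re}x^*(v)>1-2\delta$ contradicts $\sup_K\operatorname{Re}x^*<1$ once $\delta$ is small enough.

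For (3)$\Rightarrow$(1), let $T=x^*\otimes y$ be rank-one and normalize so that $\|y\|=1$ and $\|x^*\|=\|T\|$. By homogeneity it suffices to get $\|\mathrm{Id}+T\|\ge 1+\|T\|-C\varepsilon$. Choose $x\in S_X$ with $\operatorname{Re}x^*(x)$ close to $\|x^*\|$. By (3), $x$ is a limit of convex combinations of points $u$ with $\|u\|\le1+\varepsilon$ and $\|y+u\|\ge2-\varepsilon$. Hence some such $u$ satisfies $\operatorname{Re}x^*(u)\ge\|x^*\|-\varepsilon$, since otherwise the inequality would persist on the closed convex hull and fail at $x$. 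Pick $y^*\in S_{X^*}$ with $\operatorname{Re}y^*(y+u)\ge2-2\varepsilon$; this forces $\operatorname{Re}y^*(y)$ and $\operatorname{Re}y^*(u)$ close to $1$. Then
\[
\operatorname{Re}y^*(u+Tu) = \operatorname{Re}y^*(u)+\operatorname{Re}\bigl(x^*(u)\,y^*(y)\bigr) \approx 1+\|x^*\| ,
\]
because $x^*(u)$ is nearly the real number $\|x^*\|$ and $y^*(y)$ is nearly $1$. Dividing by $\|u\|\le 1+\varepsilon$ gives $\|\mathrm{Id}+T\|\ge 1+\|T\|-O(\varepsilon)$, and the reverse inequality is trivial.

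The main obstacle is purely bookkeeping. In the complex case one must control the rotation so that the approximations $\operatorname{Re}\approx|\cdot|$ are legitimate. In (3)$\Rightarrow$(1) one must check that $\operatorname{Re}\bigl(x^*(u)\,y^*(y)\bigr)$ is close to $\|x^*\|$; this follows from $|x^*(u)|\le(1+\varepsilon)\|x^*\|$ together with $\operatorname{Re}x^*(u)\ge\|x^*\|-\varepsilon$, which force a small imaginary part, and the analogous estimate for $y^*(y)$.
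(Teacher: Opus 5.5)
Your proof is correct and follows the standard route behind the result the paper cites from Kadets--Shvidkoy--Sirotkin--Werner; the paper itself offers only the hint ``by passing to adjoint operators''. You obtain (2) by applying the Daugavet equation to the adjoint of $T = x^*\otimes y$, derive (3) from (2) by Hahn--Banach separation, and close the cycle with a norming-functional estimate for $\|u+Tu\|$; the rotation making $z^*(y)$ real and the control of imaginary parts in the complex case both go through, and none of the steps needs completeness.
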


This characterisation allows to observe that every slice of $B_X$, i.e., sets of the form
\[
\operatorname{Slice}(B_X,x^*,\varepsilon) := \{ x \in B_X \mid x^*(x) > 1 - \varepsilon\},\]
for $x^* \in S_{X^*}$ and $\varepsilon > 0$, has diameter $2$, i.e., $X$ has the \emph{slice diameter two property}. A natural question is for which subspaces of a Daugavet space the corresponding quotient still has the Daugavet property. A central tool for this investigation is Theorem~1.3, which is based on the in~\cite{narrowrich} introduced theory of operators between Banach spaces $T : X \rightarrow Y$, that are narrow with respect to a pair $(G, \Gamma) \subset L(X,Y) \times \mathcal{P}(S_{X^*})$, i.e., for every $x \in S_X$, $y \in S_Y$, every $x^* \in \Gamma$, and every $\varepsilon > 0$ there exists $z \in S_X$ such that
\[
\|Gz + y\| > 2 - \varepsilon \quad \text{and} \quad \|T(x - z)\| + |x^*(x - z)| < \varepsilon.
\]
In the case of $G = \operatorname{Id}$ we say that $T$ is narrow with respect to $\Gamma$, and if also $\Gamma = S_{X^*}$ then we say that $T$ is narrow. For a Banach space $X$ with the Daugavet property we say that a closed subspace $Y \subset X$ is rich with respect to $\Gamma \subset S_{X^*}$ if the quotient map $q : X \rightarrow X/Y$ is narrow with respect to $\Gamma$, and rich if $\Gamma = S_{X^*}$. It was shown in~\cite[Theorem~5.12]{narrowrich} that a closed subspace $Y \subset X$ is rich if $Y \subset \tilde{Y} \subset X$ implies that $\tilde{Y}$ has the Daugavet property.

The following notion, which dualises the concept of rich subspaces, was introduced in \cite{quotient}.

\begin{definition}
Let $X \in \mathrm{DPr}$.  
A closed subspace $Z \subset X$ is said to be \emph{poor} if
\[
X / \tilde{Z} \in \mathrm{DPr}
\quad \text{for every closed subspace } \tilde{Z} \subset Z.
\]
\end{definition}

It turns out that a closed subspace of $Z \subset X$ is poor if the quotient map from $X^*$ to $X^*/Z^\perp$ is narrow with respect to $S_X$.

\begin{theorem}\cite[Theorem~5.8]{quotient} Let $X$ be a Banach space enjoying the Daugavet property. For a closed subspace $Z \subset X$, the following conditions are equivalent:
\begin{enumerate}
    \item $Z$ is poor.
    \item $X / \tilde{Z} \in \mathrm{DPr}$ for every closed subspace $\tilde{Z} \subset Z$ of codimension $\mathrm{codim}_Z(\tilde{Z}) \le 2$.
    \item $Z^{\perp}$ is a subspace of $X^*$ that is rich with respect to $S_X$.
    \item For every $x^*, e^* \in S_{X^*}$, $\varepsilon > 0$, and for every $x \in S_X$ such that $e^*(x) > 1 - \varepsilon$,  
    there exists an element $v^* \in B_{X^*}$ satisfying
    \[
    v^*(x) > 1 - \varepsilon, \quad 
    \|x^* + v^*\| > 2 - \varepsilon, \quad 
    \|(e^* - v^*)|_Z\| < \varepsilon,
    \]
    that is, the quotient map from $X^*$ onto $X^* / Z^{\perp}$ is narrow with respect to $S_X$.
\end{enumerate}
\end{theorem}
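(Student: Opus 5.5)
The plan is to prove the cycle (1)$\Rightarrow$(2)$\Rightarrow$(4)$\Rightarrow$(1) and then to identify (3) with (4). The implication (1)$\Rightarrow$(2) is trivial, since it is a special case of the definition of poor. For (3)$\Leftrightarrow$(4), we unwind the definition of narrowness of the quotient map $q\colon X^*\to X^*/Z^\perp$ with respect to $\Gamma=S_X\subset S_{X^{**}}$. We use the isometric identification $X^*/Z^\perp\cong Z^*$, under which $\|q(e^*-v^*)\|=\|(e^*-v^*)|_Z\|$. Narrowness asks, for given $e^*\in S_{X^*}$, $x^*\in S_{X^*}$ and $x\in S_X$, for some $v^*\in S_{X^*}$ with $\|x^*+v^*\|>2-\varepsilon$ and $\|(e^*-v^*)|_Z\|+|x(e^*-v^*)|<\varepsilon$.

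If $e^*(x)>1-\varepsilon$, this yields $v^*(x)>1-2\varepsilon$, which after rescaling $\varepsilon$ is (4). Conversely, suppose (4) holds and $e^*(x)$ is arbitrary. We first pass to a slice: pick $x'\in S_X$ close to a norming direction for $e^*$, or use the standard trick of perturbing $x$ by a multiple of an almost-norming vector of $e^*$, so that the condition $|x(e^*-v^*)|$ small is recovered. Finally we normalize $v^*\in B_{X^*}$ to $S_{X^*}$; the estimate $\|x^*+v^*\|>2-\varepsilon$ forces $\|v^*\|>1-\varepsilon$, so this costs only $O(\varepsilon)$. I expect this bookkeeping to be routine.

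For (4)$\Rightarrow$(1), let $\tilde Z\subset Z$ be closed. Then $(X/\tilde Z)^*=\tilde Z^\perp\supset Z^\perp$. We verify condition (2) of Theorem 1.1 for $X/\tilde Z$. Take $y^*\in S_{\tilde Z^\perp}$ and a quotient class $[x]$ of norm one, with a representative $x$ of norm less than $1+\delta$, normalized. Choose $e^*\in S_{\tilde Z^\perp}$ almost norming $[x]$, so that $e^*(x)>1-\varepsilon$. Applying (4) gives $v^*$ with $\|(e^*-v^*)|_Z\|<\varepsilon$, hence $\|v^*|_{\tilde Z}\|<\varepsilon$. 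Since $\operatorname{dist}(v^*,\tilde Z^\perp)=\|v^*|_{\tilde Z}\|$ by Hahn--Banach, we can replace $v^*$ by some $w^*\in \tilde Z^\perp$ with $\|v^*-w^*\|<\varepsilon$. Then $w^*([x])>1-3\varepsilon$ and $\|y^*+w^*\|>2-3\varepsilon$, where the norm is computed in $X^*$, which equals the norm in $(X/\tilde Z)^*$. After normalizing, this gives the Daugavet property of $X/\tilde Z$ by Theorem 1.1.

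The main obstacle is (2)$\Rightarrow$(4): we must extract the uniform dual condition from knowing only the quotients $X/\tilde Z$ with $\operatorname{codim}_Z\tilde Z\le2$. Given $x^*,e^*,x$, the idea is to argue by contradiction. Suppose no suitable $v^*$ exists. Then the convex set $\{v^*\in B_{X^*}: v^*(x)>1-\varepsilon,\ \|x^*+v^*\|>2-\varepsilon\}$ is disjoint from $e^*+\varepsilon B_{Z^\perp+\ldots}$, in the sense that its restrictions to $Z$ stay $\varepsilon$-far from $e^*|_Z$. A Hahn--Banach separation in $Z^*$ with the weak$^*$ topology should then produce finitely many, and after a reduction at most two, elements $z_1,z_2\in Z$ witnessing the separation. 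Set $\tilde Z=\ker$ of the corresponding functionals on $Z$, or rather $\tilde Z$ of codimension at most $2$ in $Z$ annihilated appropriately by $e^*$ and $x^*$. For this choice, the Daugavet property of $X/\tilde Z$, applied via Theorem 1.1(2) to $[x]$ and to $x^*$ restricted after a small adjustment into $\tilde Z^\perp$, gives a functional $v^*\in\tilde Z^\perp$ that contradicts the separation.

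The delicate points are the following. First, $x^*$ and $e^*$ need not lie in $\tilde Z^\perp$, which is why codimension $2$ is used: we choose $\tilde Z=Z\cap\ker x^*\cap\ker e^*$. Second, passing to the weak$^*$ closure requires justification, and this is exactly where I expect the real work. I would first attempt to make it work using the slice-diameter/Daugavet characterization (3) of Theorem 1.1 in $X/\tilde Z$, lifting the points $u$ to $X$.
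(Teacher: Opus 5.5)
The paper gives no proof of this theorem (it is quoted from \cite{quotient}), so your proposal has to stand on its own. Three of your implications are fine:
\begin{itemize}
\item (1)$\Rightarrow$(2) is immediate.
\item (3)$\Rightarrow$(4) works if you apply (3) with some $\varepsilon'<\min\{\varepsilon,\,e^*(x)-1+\varepsilon\}$ instead of ``rescaling''.
\item (4)$\Rightarrow$(1) is correct. Replacing $v^*$ by $w^*\in\tilde Z^\perp$ with $\|v^*-w^*\|=\|v^*|_{\tilde Z}\|<\varepsilon$ is exactly the right move.
\end{itemize}

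\textbf{The gap is in the main implication (2)$\Rightarrow$(4).} First, the set $\{v^*\in B_{X^*}: v^*(x)>1-\varepsilon,\ \|x^*+v^*\|>2-\varepsilon\}$ is not convex, so there is nothing to separate. For example, in $L^1[0,1]^*=L^\infty[0,1]$ take $x=1$ and $x^*=\chi_D$, with $D_1\subset D$ and $0<\mathcal{L}(D_1)<\mathcal{L}(D)<\varepsilon/2$. The functionals $\chi_{[0,1]\setminus D}\pm(\chi_{D_1}-\chi_{D\setminus D_1})$ lie in the set, but their midpoint does not.

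More fundamentally, your closing step uses only Theorem~1.1(2) in $X/\tilde Z$. That produces $v^*\in\tilde Z^\perp$ in a weak$^*$-\emph{slice} with $\|x^*+v^*\|>2-\varepsilon$. It gives no control at all on $\|(e^*-v^*)|_Z\|$, so no contradiction arises.

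Your (4)$\Rightarrow$(3) is not routine bookkeeping either. Condition (4) controls $v^*$ at a single almost-norming point, and only from below. Perturbing $x$ by an almost-norming vector of $e^*$ therefore bounds $v^*(x)$ from below but not from above.

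\textbf{The missing ingredient is the weak$^*$-open version of Theorem~1.1(2).} If $W\in\mathrm{DPr}$, then for every $x^*\in S_{W^*}$, every $\varepsilon>0$ and every nonempty relatively weak$^*$-open $U\subset B_{W^*}$, there is $v^*\in U$ with $\|x^*+v^*\|>2-\varepsilon$. To prove it:
\begin{itemize}
\item By Bourgain's lemma, $U$ contains $\sum_{k=1}^n\lambda_kS_k$ for some weak$^*$-slices $S_k$.
\item Set $u_0^*=x^*$. Theorem~1.1(2) yields $v_k^*\in S_k$ with $\|u_{k-1}^*+v_k^*\|$ arbitrarily close to $2$; let $u_k^*$ be the normalised sum.
\item A functional almost norming $u_{n-1}^*+v_n^*$ then almost norms $x^*,v_1^*,\dots,v_n^*$ simultaneously, so $\|x^*+\sum_k\lambda_kv_k^*\|$ is close to $2$.
\end{itemize}

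With this lemma, (2)$\Rightarrow$(3) is a direct argument.
\begin{itemize}
\item Take $\tilde Z=Z\cap\ker e^*\cap\ker x^*$, which was your choice. Then $e^*,x^*\in S_{\tilde Z^\perp}=S_{(X/\tilde Z)^*}$.
\item For $v^*\in\tilde Z^\perp$, the functional $(e^*-v^*)|_Z$ factors through the at most two-dimensional space $Z/\tilde Z$. Hence $\|(e^*-v^*)|_Z\|\le K\max_i|(e^*-v^*)(z_i)|$ for fixed $z_1,z_2\in Z$ spanning $Z$ modulo $\tilde Z$.
\item So $U=\{v^*\in B_{\tilde Z^\perp}:\ |(v^*-e^*)(x)|<\eta,\ |(v^*-e^*)(z_i)|<\eta/K\}$ is a relatively weak$^*$-open neighbourhood of $e^*$.
\item The lemma applied in $X/\tilde Z$ gives the required $v^*$ for arbitrary $x\in S_X$; normalise at the end.
\end{itemize}
This is the real role of codimension $2$: it turns the norm condition on $Z$ into finitely many weak$^*$ conditions. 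The cycle (1)$\Rightarrow$(2)$\Rightarrow$(3)$\Rightarrow$(4)$\Rightarrow$(1) then closes, and no separate (4)$\Rightarrow$(3) is needed.
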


By comparison with Theorem~1.1, we observe that condition~(4) can be rephrased as a strengthened form of the Daugavet property, which makes the study of poor subspaces particularly natural. Theorem~1.3 was used in \cite{quotient} to show that the reflexive subspaces of a Banach space with the Daugavet property are poor, a result originally due to Shvydkoy \cite{shvydkoy2000geometric}. Another important result in the study of poor subspaces is that $C[0,1]$ and $L^1[0,1]$ both contain a subspace isomorphic to $\ell^1$ that is not poor \cite[Theorem~6.10]{quotient}, showing that the Radon–Nikodým property does not imply poverty. In the same paper, it was left open whether closed subspaces with separable duals, or those not containing isomorphic copies of $\ell^1$, are necessarily poor. We will provide a simple subspace of $C[0,1]$ isomorphic to $c_0$ that is not poor in $C[0,1]$, thereby giving negative answers to both of the above questions.

Independent of this, we first address the question of whether a normed space with the Daugavet property necessarily has non-smooth points, as Theorem~1.1 might suggest. We recall that for a normed space $X$ and $x \in X \setminus \{0\}$, the norm of $X$ is called \emph{G\^ateaux differentiable at $x$} 
(or $x$ is a smooth point) if
\[
\lim_{t \to 0}
\frac{\|x + t h\| + \|x - t h\| - 2\|x\|}{t}
= 0
\quad \text{for every } h \in S_X.
\]

A fundamental tool for detecting smooth points is given by Shmulyan's test, which characterises smoothness in terms of unique supporting functionals. This result can be found in \cite[Corollary~7.22]{fabian2011banach}.

\begin{theorem}[Shmulyan's test]
Let $X$ be a normed space and let $x \in X \setminus \{0\}$. Then $x$ is a smooth point if and only if there exists a unique functional $f \in S_{X^*}$ such that
$f(x) = \|x\|$.
\end{theorem}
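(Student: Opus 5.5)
Since $\|\cdot\|$ is positively homogeneous, I will reduce at once to $\|x\|=1$. The plan is to compare the symmetric quotient in the definition with the one-sided directional derivative
\[
\tau(x,h):=\lim_{t\downarrow 0}\frac{\|x+th\|-\|x\|}{t}.
\]
For $t>0$ the symmetric quotient equals $\frac{\|x+th\|-1}{t}+\frac{\|x-th\|-1}{t}$. The case $t<0$ is the same expression with $h$ replaced by $-h$, so it suffices to treat $t\downarrow0$. By convexity of $t\mapsto\|x+th\|$, the difference quotient is monotone, so $\tau(x,h)$ exists. Moreover $\tau(x,\cdot)$ is sublinear, and $\tau(x,h)+\tau(x,-h)\ge \tau(x,0)=0$. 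Hence $x$ is smooth if and only if $\tau(x,-h)=-\tau(x,h)$ for all $h$, that is, if and only if $\tau(x,\cdot)$ is linear.

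The key step is the max-formula
\[
\tau(x,h)=\max\{f(h)\mid f\in S_{X^*},\ f(x)=1\}.
\]
Write $D(x)$ for the set on the right. The inequality "$\ge$" is immediate: for $f\in D(x)$ we have $\|x+th\|\ge f(x+th)=1+tf(h)$. For "$\le$" I apply Hahn--Banach. On $\operatorname{span}\{x,h\}$ I define $g(\alpha x+\beta h)=\alpha+\beta\tau(x,h)$. I check that $g\le p$ with $p(y)=\tau(x,y)$, a sublinear functional satisfying $\tau(x,x)=1$ and $\tau(x,y)\le\|y\|$. Then I extend $g$ to $f\le p$ on all of $X$. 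This gives $f(y)\le\|y\|$, so $\|f\|\le1$, and $f(x)=1$, so $f\in D(x)$ with $f(h)=\tau(x,h)$. Verifying $g\le p$ on the span uses $p(\alpha x+\beta h)=\alpha+p(\beta h)$, which holds because $\tau(x,\alpha x+y)=\alpha+\tau(x,y)$. This identity follows from $\|x+t(\alpha x+y)\|=(1+t\alpha)\bigl\|x+\tfrac{t}{1+t\alpha}y\bigr\|$. For $\beta<0$ one then needs $-\tau(x,-h)\le\tau(x,h)$, which is the sublinearity inequality already noted.

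Now I conclude in both directions. If $D(x)=\{f\}$, then $\tau(x,h)=f(h)$ is linear, so $x$ is smooth. Conversely, suppose $f\ne g$ both lie in $D(x)$, and pick $h$ with $f(h)>g(h)$. Then
\[
\tau(x,h)+\tau(x,-h)\ge f(h)-g(h)>0,
\]
so the norm is not Gâteaux differentiable at $x$. Note also that $D(x)\ne\emptyset$ by Hahn--Banach, so uniqueness is the only issue.

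Completeness of $X$ is never used: Hahn--Banach holds in any normed space, and $X^*$ is the same as the dual of the completion. The main obstacle is the careful verification of $g\le p$ in the extension step. Everything else is routine convexity.
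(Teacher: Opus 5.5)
The paper does not prove this statement. Shmulyan's test is imported from \cite{fabian2011banach} (Corollary~7.22) and only applied afterwards, in Theorems~2.1--2.3. There is therefore no argument in the paper to set yours against, and your proposal has to stand on its own.

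It does, and it is correct. You reduce smoothness to linearity of the one-sided derivative $\tau(x,\cdot)$. You then prove the max formula $\tau(x,h)=\max\{f(h)\mid f\in D(x)\}$ by a sublinear Hahn--Banach extension from $\operatorname{span}\{x,h\}$. This is the standard convex-analysis proof. Your check of $g\le p$ via $\tau(x,\alpha x+y)=\alpha+\tau(x,y)$ and $-\tau(x,h)\le\tau(x,-h)$ is right.

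As a byproduct you get the quantitative identity $\tau(x,h)+\tau(x,-h)=\max_{f\in D(x)}f(h)-\min_{f\in D(x)}f(h)$. You also correctly note that ``two norming functionals $\Rightarrow$ not smooth'' needs only the trivial half of the max formula.

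A common alternative route is Shmulyan's sequential lemma ($f_n(x)\to1$ and $g_n(x)\to1$ force $f_n-g_n\to0$ weak$^*$) combined with weak$^*$ compactness of $B_{X^*}$. Compared with that, yours uses nothing beyond Hahn--Banach and never uses completeness. This matches the generality (normed spaces) in which the paper states the test.

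Three cosmetic points:
\begin{enumerate}
\item For $t<0$ the symmetric quotient is the \emph{negative} of the quotient at $|t|$, not the same expression with $h$ replaced by $-h$. Your conclusion that only $t\downarrow0$ matters is unaffected.
\item If $h\in\operatorname{span}\{x\}$, well-definedness of $g$ needs $\tau(x,cx)=c$. This holds because $\|x+tcx\|=1+tc$ for small $t>0$.
\item Your argument is for real scalars, as in the paper. For complex $X$, pass to the underlying real space via $f\mapsto\operatorname{Re}f$. This is an isometric bijection of the duals and preserves the set of norming functionals of $x$.
\end{enumerate}
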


Using the following result, which asserts that in the dual of a Daugavet space, given any point on the unit sphere and any separable subspace of the dual, one can find a functional that nearly attains its norm at the chosen point while simultaneously being codirected with every element of the separable subspace, we can show that the dual of a Daugavet space does not contain any smooth points.

\begin{lemma}\cite[Lemma~5.1]{quotient}
Let $X$ be a normed space with the Daugavet property. Then for every $x \in S_X$, every $\varepsilon > 0$, and every separable subspace $V \subset X^*$, there exists $x^* \in S_{X^*}$ such that  
\[
\operatorname{Re} x^*(x) \ge 1 - \varepsilon
\quad \text{and} \quad
\|x^* + f\| = 1 + \|f\| \quad \text{for all } f \in V.
\]
\end{lemma}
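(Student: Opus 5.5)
We prove Lemma~1.5 in three stages: first for a single functional $f$ with $x^*$ only nearly codirected, then for a countable dense family with increasingly precise estimates, and finally by a weak$^*$ cluster point. Throughout we may assume $X$ is complete, since $X$ and its completion have the same dual, and the Daugavet property passes to the completion by density of $X$ and continuity of $\|\mathrm{Id}+T\|$ in $T$. We may also assume $V$ is closed, since the relation $\|x^*+f\|=1+\|f\|$ is continuous in $f$. Finally, homogeneity lets us treat only $f\in S_{X^*}$: if $\|x^*+f\|=2$ for $\|f\|=1$, then for $t\ge 0$ the triangle inequality gives $\|x^*+tf\|\ge \|x^*+f\|-(1-t)\|f\| \ge 1+t$ when $t\le1$, and a symmetric estimate handles $t\ge 1$. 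So it suffices to find $x^*\in S_{X^*}$ with $\operatorname{Re}x^*(x)\ge 1-\varepsilon$ and $\|x^*+f\|=2$ for all $f$ in a dense subset of $S_V$, by continuity.

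\textbf{Stage 1 (one step).} The basic tool is the weak$^*$-slice form of Theorem~1.1(2). Given $x\in S_X$, $f\in S_{X^*}$ and $\delta>0$, there is $y^*\in S_{X^*}$ with $\operatorname{Re}y^*(x)>1-\delta$ and $\|f+y^*\|>2-\delta$. The point is that this can be localized: every weak$^*$-slice $\{y^*\in B_{X^*}:\operatorname{Re}y^*(x)>1-\delta\}$ contains, for any $f\in S_{X^*}$, a further nonempty weak$^*$-slice all of whose elements $y^*$ satisfy $\|f+y^*\|>2-\delta'$. To produce it, pick $z\in S_X$ with $\operatorname{Re}(f+y^*)(z)>2-\delta'$, and intersect with the slice $\{\operatorname{Re}y^*(z)>1-\delta'\}$. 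The intersection contains the $y^*$ above, and every element $w^*$ of it satisfies $\|f+w^*\|\ge \operatorname{Re}(f+w^*)(z)>2-2\delta'$ provided $\operatorname{Re}f(z)>1-\delta'$. The intersection of two weak$^*$-slices is a relatively weak$^*$-open set. In Daugavet spaces, relatively weak$^*$-open subsets of $B_{X^*}$ containing points of norm one behave like slices; this is the localized version of Theorem~1.1(2) proved, as in the original Kadets--Shvydkoy--Schechtman--Werner argument, by applying the Daugavet equation to rank-one operators $x^*\otimes y$.

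\textbf{Stage 2 (iteration).} Let $\{f_n\}$ be dense in $S_V$. We build a decreasing sequence of relatively weak$^*$-open sets $U_0\supset U_1\supset\cdots$ in $B_{X^*}$, each meeting $S_{X^*}$, with the following properties: $U_0=\{\operatorname{Re}y^*(x)>1-\varepsilon\}$, and every $w^*\in U_n$ satisfies $\operatorname{Re}(w^*+f_k)(z_k)>2-1/n$ for all $k\le n$, where $z_k\in S_X$ are fixed witnesses. Each $U_n$ is a finite intersection of open weak$^*$ half-spaces, so we shrink to weak$^*$-closed slabs, using $\ge$ instead of $>$ with slightly smaller constants. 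These closed slabs are nested, weak$^*$-compact and nonempty, so their intersection contains some $x^*\in B_{X^*}$. This $x^*$ satisfies $\operatorname{Re}x^*(x)\ge1-\varepsilon$ and $\|x^*+f_k\|\ge 2-1/n$ for all $n\ge k$, hence $\|x^*+f_k\|=2$. This forces $\|x^*\|=1$.

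\textbf{Main obstacle.} The delicate step is Stage 1 inside a weak$^*$-open set rather than a slice. We must check that the witnesses $z_k$ can be chosen so that the new conditions are compatible with all previous ones, that is, that the relevant intersection stays nonempty. The first approach to try is to use the Daugavet property in its form that every relatively weak$^*$-open subset $U$ of $B_{X^*}$ and every $f\in S_{X^*}$ admit $y^*\in U$ with $\|f+y^*\|>2-\delta$. This follows from the fact that the weak$^*$-closed convex hull of $U$ contains slices (Bourgain's lemma, weak$^*$ version) together with Theorem~1.1(2) applied to those slices and an averaging argument. If that becomes messy, the fallback is to replace each weak$^*$-open set by a single weak$^*$-slice defined by one vector $\sum_k 2^{-k}z_k$. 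In that case one controls the size of the slices rather than their intersection pattern.
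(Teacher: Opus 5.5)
Your overall scheme is the right one: the homogeneity reduction to a dense subset of $S_V$, a decreasing sequence of nonempty weak$^*$-compact subsets of $B_{X^*}$, and a point in their intersection. But the step that makes the iteration run is missing, as you yourself suspect. Theorem~1.1(2) only produces almost codirected functionals inside a single weak$^*$-slice. Your Stage~1 outputs an intersection of two slices, so it cannot be fed back into Theorem~1.1(2) at the next step.

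You try to bridge this with a Daugavet-type statement for arbitrary relatively weak$^*$-open subsets of $B_{X^*}$. That statement is true, but your sketch does not prove it. Suppose $\sum_i\lambda_iS_i\subset U$ and you choose $y_i^*\in S_i$ with $\|f+y_i^*\|>2-\delta$ separately for each $i$. Nothing forces $\|f+\sum_i\lambda_iy_i^*\|$ to be close to $2$, because the $y_i^*$ may be almost codirected with $f$ along different vectors of $S_X$. You need a single $z$ that almost norms $f$ and every $y_i^*$, and producing it is again a nested-slice argument. The fallback with $\sum_k2^{-k}z_k$ is not an argument as stated.

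Stage~2 also contains a concrete error. With \emph{fixed} witnesses, the requirement $\operatorname{Re}(w^*+f_k)(z_k)>2-1/n$ for all $n\ge k$ forces $\operatorname{Re}f_k(z_k)=1$, which is impossible when $f_k$ does not attain its norm. You need fresh witnesses at every step. For instance, run through a sequence $(g_n)$ in which every $f_k$ occurs infinitely often, and treat $g_n$ with precision $1/n$ at step $n$.

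The missing idea is a slice-within-slice lemma, and your Stage~1 already contains its ingredients. The functional $y^*$ you found almost norms both $x$ and $z$, so $x$ and $z$ are almost codirected, and a single slice in the direction $x+z$ can replace the intersection. Concretely, let $S(x,\delta)=\{w^*\in B_{X^*}:\operatorname{Re}w^*(x)>1-\delta\}$, $f\in S_{X^*}$ and $\eta>0$ small. Take $y^*$ from Theorem~1.1(2) with $\operatorname{Re}y^*(x)\ge1-\eta$ and $\|f+y^*\|\ge2-\eta$, and then $z\in S_X$ with $\operatorname{Re}(f+y^*)(z)>2-2\eta$. Then $\operatorname{Re}f(z)>1-2\eta$ and $\|x+z\|\ge\operatorname{Re}y^*(x+z)>2-3\eta$. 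Put $x'=(x+z)/\|x+z\|$. Every $w^*$ in the nonempty weak$^*$-slice $S(x',\eta)$ satisfies $\operatorname{Re}w^*(x)+\operatorname{Re}w^*(z)>(1-\eta)(2-3\eta)>2-5\eta$. Hence $\operatorname{Re}w^*(x)>1-5\eta$, so $S(x',\eta)\subset S(x,\delta)$ once $5\eta\le\delta$, and $\|f+w^*\|\ge\operatorname{Re}(f+w^*)(z)>2-7\eta$.

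Now start with $x_0=x$ and $\eta_0=\varepsilon$. At step $n+1$, apply the lemma inside $S(x_n,\eta_n/2)$ with $f=g_{n+1}$, choosing $\eta_{n+1}$ so that $5\eta_{n+1}\le\eta_n/2$ and $7\eta_{n+1}\le 1/(n+1)$. The weak$^*$-compact sets $\{w^*\in B_{X^*}:\operatorname{Re}w^*(x_n)\ge1-\eta_n/2\}$ are then nonempty and nested. Any $x^*$ in their intersection satisfies $\operatorname{Re}x^*(x)\ge 1-\varepsilon$ and $\|x^*+g_n\|>2-1/n$ for all $n$. Hence $\|x^*+f_k\|=2$ for every $k$, and your homogeneity and density argument finishes the proof. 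No Bourgain lemma and no weak$^*$-open version of the Daugavet property is needed.
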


\section{Smoothness in Daugavet spaces}

The original proof of the following theorem was simplified by A. Rueda Zoca.

\begin{theorem}
If $X$ is a normed space with $X \in DPr$, then $X^*$ does not have any smooth points.
\end{theorem}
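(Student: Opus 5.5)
Fix $f \in X^*\setminus\{0\}$; normalising, assume $\|f\|=1$. By Shmulyan's test (Theorem~1.4) applied in the normed space $X^*$, it suffices to exhibit two distinct norm-one functionals on $X^*$ that both attain the value $1$ at $f$. The natural candidates are elements of $X^{**}$ produced from weak$^*$ cluster points of sequences in $B_X$. First choose $x_0\in S_X$ with $f(x_0)>1/2$; we will show that every norm-one supporting functional of $f$ in $X^{***}$ cannot be unique, by building two supporting elements of $X^{**}$ that differ substantially on some fixed functional.

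Take $V=\operatorname{span}\{f\}$, which is separable. Apply Lemma~1.5 with a point $x\in S_X$ and $\varepsilon_n\to 0$ to obtain $x_n^*\in S_{X^*}$ with $\operatorname{Re}x_n^*(x)\ge 1-\varepsilon_n$ and $\|x_n^*+f\|=2$. For each $n$, pick $u_n\in S_X$ with $\operatorname{Re}(x_n^*+f)(u_n)>2-\varepsilon_n$, so $\operatorname{Re}f(u_n)>1-\varepsilon_n$ and $\operatorname{Re}x_n^*(u_n)>1-\varepsilon_n$. The idea is to run this for two choices of $x$, say $x=x_0$ and $x=-x_0$, and to arrange that the resulting near-norming points $u_n$ of $f$ are far apart in the direction of some functional. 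More concretely, instead of using $x_n^*$ directly, I would include in $V$ additionally $f$ and then for the second family apply Lemma~1.5 to $V'=\operatorname{span}\{f, x_1^*, x_2^*,\dots\}$ (still separable) with the point $-x$; this yields $y_n^*$ with $\operatorname{Re}y_n^*(-x)\ge 1-\varepsilon_n$, $\|y_n^*+f\|=2$ and $\|y_n^*+x_m^*\|=2$. Choosing $v_n\in S_X$ with $\operatorname{Re}(y_n^*+f)(v_n)>2-\varepsilon_n$, we get points $u_n,v_n$ almost norming $f$. Let $\Phi,\Psi\in B_{X^{**}}$ be weak$^*$ cluster points of $(u_n)$ and $(v_n)$ in $X^{**}$ (along a common ultrafilter). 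Then $\Phi(f)=\Psi(f)=1$, so both are supporting functionals of $f$ of norm one.

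The main obstacle is to guarantee $\Phi\neq\Psi$; simply having different $x$'s does not control the values of the $u_n$, $v_n$ on a fixed functional. The fix I would try is to select the norming points through the combined functionals: for the second family choose $v_n$ with $\operatorname{Re}(y_n^*+f)(v_n)$ near $2$ while testing against a fixed $g$. Cleaner: use Theorem~1.1(3)-type freedom directly. Fix any $g\in S_{X^*}$ linearly independent of $f$ (if $\dim X^*=1$ the Daugavet property fails, since Daugavet spaces are infinite-dimensional). Using Lemma~1.5 with $V=\operatorname{span}\{f,g\}$ gives $x^*$ with $\|x^*+f\|=2$ and $\|x^*+ \lambda f+\mu g\|=1+\|\lambda f+\mu g\|$; in particular $\|x^*+f+ t g\|=1+\|f+tg\|$ and $\|x^*+f-tg\|=1+\|f-tg\|$. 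Choosing $u\in S_X$ almost norming $x^*+f+tg$ and $v$ almost norming $x^*+f-tg$ produces near-norming points of $f$ on which $g$ takes values differing by roughly $\|f+tg\|+\|f-tg\|-2$ divided by $t$; passing to cluster points with $t\to0$ relates this to the one-sided derivatives of the norm of $X^*$ at $f$ in direction $g$. So the cleanest route is to show directly that $\|f+tg\|+\|f-tg\|-2\ge ct$ for some $g$ and $c>0$, i.e.\ non-smoothness via the definition. For this, apply Lemma~1.5 to $V=\operatorname{span}\{f\}$ and the point $x_0$ to get $g=x^*$ with $\|f\pm$ \ldots; indeed $\|f+tx^*\|=1+t$ by the lemma, while $\|f-tx^*\|\ge \operatorname{Re}(f-tx^*)(u)$ for $u$ near-norming $f$ with $x^*(u)$ controlled. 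I expect the decisive step to be obtaining the lower bound $\|f-tg\|\ge 1+ct$, which I would get by choosing a second lemma-functional $y^*$ codirected with both $f$ and $-g$ ($V=\operatorname{span}\{f,g\}$): then $\|f-tg\|\ge$ evaluation at points almost norming $y^*+f-tg$ exploiting $\|(-g)+\tfrac1t f\|$ structure, giving $1+t\cdot$const after rescaling.
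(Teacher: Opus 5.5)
Your proposal has a genuine gap: it never closes. You abandon the weak$^*$ cluster-point construction because, as you note yourself, nothing forces $\Phi\neq\Psi$. The variant with $V=\operatorname{span}\{f,g\}$ and points almost norming $x^*+f\pm tg$ just reduces to the lower bound on $\|f+tg\|+\|f-tg\|-2$ that you then try to prove.

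In the final route, you leave the step you call decisive, $\|f-tg\|\ge 1+ct$, to a second lemma-functional $y^*$, and that cannot deliver it. The identity $\|y^*+f-tg\|=1+\|f-tg\|$ says nothing about the size of $\|f-tg\|$, and evaluating at points almost norming $y^*+f-tg$ only returns $\|f-tg\|$ again. Likewise, bounding $\|f-tx^*\|$ below by $\operatorname{Re}(f-tx^*)(u)$ at a near-norming point $u$ of $f$ would need $\operatorname{Re}x^*(u)\le -c<0$. Nothing in your setup controls that.

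The missing observation is that Lemma~1.5 gives $\|x^*+h\|=1+\|h\|$ for \emph{every} $h$ in the subspace $V$. The space $V=\operatorname{span}\{f\}$ contains $-f/t$ as well as $f/t$. So the single functional $x^*$ you already have satisfies
$\|f-tx^*\|=t\,\|x^*-f/t\|=t(1+1/t)=1+t$, alongside $\|f+tx^*\|=1+t$. Hence $\bigl(\|f+tx^*\|+\|f-tx^*\|-2\bigr)/t=2$ for all $t>0$, and $f$ is not smooth, directly from the definition. That ending would be a valid alternative and even bypasses Shmulyan's test.

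The paper uses the same fact at $t=1$: $\|f+x^*\|=\|f-x^*\|=2$. Hahn--Banach in $X^{**}$ (no cluster points needed) gives $F,G\in S_{X^{**}}$ with $F(f+x^*)=2$ and $G(f-x^*)=2$. Then $F(f)=G(f)=1$ while $F(x^*)=1=-G(x^*)$, so $f$ has two distinct supporting functionals, contradicting Shmulyan's test. Neither an enlarged $V$, nor a second functional $y^*$, nor the auxiliary point $x_0$ is needed.
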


\begin{proof}
Assume that there exists a smooth point $v^{*} \in S_{X^{*}}$. Let $V = \operatorname{span}\{v^{*}\}$ and choose $x^{*} \in S_{X^{*}}$ according to the previous lemma. Then
\[
\|v^{*}+x^{*}\| = 2
\quad\text{and}\quad
\|v^{*}-x^{*}\| = 2.
\]
Hence there exist $F,G \in S_{X^{**}}$ such that
\[
F(v^{*}+x^{*})=2
\quad\text{and}\quad
G(v^{*}-x^{*})=2.
\]
It follows that
\[
F(v^{*})=G(v^{*})=1,
\quad
F(x^{*})=1,
\quad
\text{and}
\quad
G(x^{*})=-1.
\]
Thus $F \neq G$, and both functionals norm $v^{*}$. This contradicts Shmulyan's test. Therefore, $X^*$ has no smooth points.
\end{proof}

The next theorem does not follow from the previous one, since V.~Kadets and D.~Werner constructed in \cite[Theorem 3.3]{schur} an example of a Daugavet space whose ultrapower does not have the Daugavet property by taking the $\ell_1$-sum of modified Bourgain-Rosenthal spaces. 

A.~Rueda Zoca also showed in \cite{slice} that for every $\varepsilon>0$ there exists a Banach space $X$ with the Daugavet property such that, for every free ultrafilter $\mathcal{U}$ over $\mathbb{N}$, the ultrapower $X^{\mathcal{U}}$ has a slice of diameter less than $\varepsilon$. For background on ultraproducts, we refer to \cite{zbMATH03640303}.

\begin{theorem}
If $X$ is a Banach space with $X \in DPr$ and $\mathcal{U}$ is a free ultrafilter on $\mathbb{N}$, then $X^{\mathcal{U}}$ has no smooth points.
\end{theorem}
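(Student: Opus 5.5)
Let $\xi = (x_n)_{\mathcal U} \in S_{X^{\mathcal U}}$ be arbitrary; we may assume $\|x_n\| = 1$ for all $n$. The plan is to apply Shmulyan's test: exhibit two distinct norm-one functionals on $X^{\mathcal U}$ that both norm $\xi$. We build them from sequences of functionals in $X^*$ via ultralimits. Recall that every $(x_n^*)\in\ell_\infty(X^*)$ defines $\Phi\in (X^{\mathcal U})^*$ by $\Phi((y_n)_{\mathcal U}) = \lim_{\mathcal U} x_n^*(y_n)$, and $\|\Phi\| = \lim_{\mathcal U}\|x_n^*\|$.

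\emph{Step 1.} Fix $\varepsilon_n \downarrow 0$ and, for each $n$, some $f_n \in S_{X^*}$ with $f_n(x_n) = 1$. Apply Lemma~1.5 with $x = x_n$, $\varepsilon=\varepsilon_n$, $V = \operatorname{span}\{f_n\}$. This gives $g_n \in S_{X^*}$ with $\operatorname{Re} g_n(x_n) \ge 1-\varepsilon_n$ and $\|g_n \pm f_n\| = 2$. Since the sign is free in $V$, we get both $\|f_n+g_n\|=2$ and $\|f_n - g_n\| = 2$.

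\emph{Step 2.} Put $h_n = (f_n+g_n)/2 \in B_{X^*}$. Then $\operatorname{Re} h_n(x_n) \ge 1-\varepsilon_n/2$, and $\|f_n - h_n\| = \|f_n-g_n\|/2 = 1$. Let $\Phi,\Psi$ be the functionals induced by $(f_n)$ and $(h_n)$. Then $\|\Phi\|=1$ and $\Phi(\xi)=1$. Also $\operatorname{Re}\Psi(\xi) = 1$ and $\|\Psi\|\le 1$, so $\|\Psi\|=1$ and $\Psi(\xi)=1$. Both therefore norm $\xi$.

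\emph{Step 3, the main obstacle.} We must show $\Phi \ne \Psi$. Knowing $\|f_n-h_n\|=1$ only gives $\|\Phi-\Psi\| = \lim_{\mathcal U}\|f_n-h_n\|$ on the sub-ultrapower $(X^*)^{\mathcal U}$. This embeds isometrically into $(X^{\mathcal U})^*$, and that embedding is what we need. The point is that it must be verified through witnesses in $X$: since $\|f_n - g_n\| = 2$, choose $y_n \in S_X$ with $\operatorname{Re}(f_n-g_n)(y_n) > 2-\varepsilon_n$. Then $\eta=(y_n)_{\mathcal U}\in S_{X^{\mathcal U}}$ satisfies
\[
\operatorname{Re}(\Phi-\Psi)(\eta) = \lim_{\mathcal U}\tfrac12\operatorname{Re}(f_n-g_n)(y_n) = 1 .
\]
Hence $\Phi\neq\Psi$. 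Shmulyan's test (Theorem~1.4) then shows that $\xi$ is not a smooth point. Since $\xi$ was arbitrary, and smoothness is invariant under positive scaling, $X^{\mathcal U}$ has no smooth points.

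The delicate part is doing everything coordinatewise with quantitative witnesses $y_n$, rather than passing to $(X^{\mathcal U})^*$ abstractly. This matters because $X^{\mathcal U}$ itself need not have the Daugavet property, as the remarks before the theorem point out, so Lemma~1.5 cannot be applied to $X^{\mathcal U}$ directly. It is applied in each coordinate $X$ instead, and the conclusion is transported through the ultralimit. In the complex case one takes real parts throughout, exactly as in the lemma.
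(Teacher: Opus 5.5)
Your proof is correct and is essentially the paper's argument: the ultralimits of a norming sequence $(f_n)$ and of the functionals supplied by Lemma~1.5 both norm $\xi$ yet differ, so Shmulyan's test rules out smoothness. The averaging $h_n=(f_n+g_n)/2$ is unnecessary, since the functional induced by $(g_n)$ already norms $\xi$ because $\operatorname{Re} g_n(x_n)\ge 1-\varepsilon_n$; your witness $\eta$ just makes explicit the paper's ``obviously $g\neq f$''.
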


\begin{proof}
Suppose $x = [x_1, x_2, \dots] \in S_{X^{\mathcal{U}}}$ is a smooth point. We can assume that $\mathcal{k}x_i \mathcal{k} = 1$ for every $i \in \mathbb{N}$. Then, for $f_i \in S_{X^*}$ with $f_i(x_i) > 1 - 1/i$, we have for $f = [f_1, f_2, \dots] \in S_{(X^{\mathcal{U}})^*}$ that $f(x) = 1$. By Lemma~1.5, we can choose $g_i \in S_{X^*}$ with $g_i(x_i) > 1 - 1/i$ and $\mathcal{k}g_i - f_i\mathcal{k} = 2$ for every $i$. Now, for $g = [g_1, g_2, \dots] \in S_{(X^{\mathcal{U}})^*}$ we have $g(x) = 1$, while obviously $g \neq f$. This contradicts Shmulyan's test.
\end{proof}

Let $X \subset L^{1}[0,1]$ be a subspace and let $f \in S_{X}$ satisfy
\[
\mathcal{L}\big(\{x \in [0,1] \mid f(x)=0\}\big)=0,
\]
where $\mathcal{L}$ denotes the one-dimensional Lebesgue measure. Then there exists a unique $g \in S_{L^{\infty}[0,1]}$ such that
\[
\int_{0}^{1} f(x) g(x)\,dx = 1.
\]
By Shmulyan's test, $f$ is a smooth point of $L^{1}[0,1]$ and hence also of $X$. With this criterion, we can construct a non-complete subspace of $L^{1}[0,1]$ having the Daugavet property that consists entirely of smooth points.
\begin{theorem}
There is a smooth non-closed subspace $X \subset L^1[0,1]$ with $X \in DPr$.
\end{theorem}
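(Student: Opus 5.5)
We take $X$ to be the linear span of all functions $f\in L^1[0,1]$ that are nonzero almost everywhere. Our plan is to choose instead a concrete space $X$, dense in $L^1[0,1]$, in which every nonzero element vanishes only on a null set. By the criterion just stated, every nonzero element of such an $X$ is then a smooth point of $X$. Concretely, we take $X$ to be the polynomials, or more generally the restrictions to $[0,1]$ of real-analytic (entire) functions. A nonzero polynomial has finitely many zeros, so its zero set has Lebesgue measure zero. The polynomials are dense in $L^1[0,1]$ (by Weierstrass), but they are not closed, since $L^1[0,1]$ contains functions that are not polynomials. So $X$ is a smooth, non-closed subspace.

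It remains to prove $X\in\mathrm{DPr}$. The key observation is that the Daugavet property passes to dense subspaces. Let $T\colon X\to X$ be rank one, $Tx=x^*(x)y$ with $x^*\in X^*$ and $y\in X$. Since $X$ is dense, $x^*$ extends uniquely, with the same norm, to $\hat x^*\in (L^1)^*$. The operator $\hat T=\hat x^*\otimes y$ is then a rank-one operator on $L^1[0,1]$ with $\|\hat T\|=\|T\|$. The map $\mathrm{Id}+\hat T$ is the continuous extension of $\mathrm{Id}+T$, and by density the two have the same norm. Since $L^1[0,1]\in\mathrm{DPr}$ (a classical result), we get
\[
\|\mathrm{Id}+T\|=\|\mathrm{Id}+\hat T\|=1+\|\hat T\|=1+\|T\|.
\]
This is exactly the definition of the Daugavet property for the normed space $X$.

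Alternatively, one can check condition~(3) of Theorem~1.1 directly and transfer it by density. This also works, because closures are taken in $X$ and every $u$ in the $L^1$ set can be approximated by elements of $X$ with a small change in $\|y+u\|$.

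The main point needing care is the smoothness criterion. The problem is that smoothness of $f$ in $L^1$ passes to $X$ only if the norming functional is unique on $X$. This holds because every $h\in S_{X^*}$ extends by density to a norming functional on $L^1$, and that extension is unique; so uniqueness in $L^1$ transfers. This is the step the paper states before the theorem, so we can rely on it.

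The remaining step is the measure-zero zero set, which is immediate for polynomials. So the only genuinely nontrivial input is the classical fact $L^1[0,1]\in\mathrm{DPr}$.
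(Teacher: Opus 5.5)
Your proof is correct, and it takes a genuinely different and much shorter route than the paper.

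\textbf{Your argument.} You take $X$ to be the polynomials and obtain each property as follows.
\begin{enumerate}
\item Smoothness comes from the remark preceding the theorem, since a nonzero polynomial has finitely many zeros.
\item Non-closedness comes from density together with $X\neq L^1[0,1]$.
\item The Daugavet property comes from the general fact that a dense subspace of a Daugavet space is Daugavet, combined with the classical theorem (Lozanovskii) that $L^1[0,1]\in\mathrm{DPr}$.
\end{enumerate}
Your extension argument for $T=x^*\otimes y$ is sound. The extension $\hat x^*$ is unique and isometric, so $\|\hat T\|=\|T\|$. Moreover $\|\mathrm{Id}+\hat T\|=\|\mathrm{Id}+T\|$, because the supremum over $B_X$ equals the supremum over $B_{L^1[0,1]}$ by density and continuity.

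\textbf{The paper's argument.} The paper builds $X=\bigcup_n X_n$ as an increasing union of finite-dimensional spaces of piecewise polynomials over successively finer interval partitions. It verifies the Daugavet property intrinsically through the approximation condition (1): every $f\in S_{X_n}$ is nearly a convex combination of elements $h_j\in S_{X_{n+1}}$ with $\|h_j+g\|>2-\varepsilon_n$. These $h_j$ are the normalized small-support pieces $v_i\chi_{I_j^{n+1}}$, perturbed by high-degree monomials. A separate inductive argument then shows that no nonzero element vanishes on a partition interval.

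\textbf{Comparison.} The paper's construction never invokes the Daugavet property of $L^1[0,1]$, but it pays for this with a delicate induction. Your approach buys brevity. It also shows that no special construction is needed: any dense subspace of $L^1[0,1]$ whose nonzero elements have null zero sets works. Its completion is $L^1[0,1]$ itself, so non-smoothness of the completion is automatic in your example.

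\textbf{Two minor corrections.}
\begin{enumerate}
\item Delete your opening sentence about the span of all a.e.-nonzero functions. That span is all of $L^1[0,1]$, which is neither smooth nor non-closed.
\item The transfer of smoothness from $L^1[0,1]$ to $X$ needs no care at all. The G\^ateaux condition for directions $h\in S_X$ is a special case of the condition for $h\in S_{L^1[0,1]}$.
\end{enumerate}
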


\begin{proof}
Let $(\varepsilon_i)_{i \in \mathbb{N}}$ be a sequence with $\varepsilon_i > 0$ for all $i$ and $\varepsilon_i \to 0$. We will construct a sequence of finite-dimensional subspaces $(X_i)_{i \in \mathbb{N}}$ of $L^1[0,1]$ satisfying $X_i \subset X_{i+1}$ for all $i$, and such that for each $i$ there exists a finite cover
\[
\bigcup_{m=1}^{m_i} I_m^i = [0,1],
\]
where each $I_m^i$ is a closed interval, any two distinct intervals intersect only at their endpoints, and every $f \in X_i$ is a polynomial when restricted to each interval $I_m^i$. Moreover, we require that every $f \in S_{X_i}$ is nonzero on every interval $I_m^i$. We construct the spaces in such a way that for every $g,f \in S_{X_i}$ there exists a finite family $\{h_j\}_{j=1}^{n_i} \subset S_{X_{i+1}}$ such that
\[
\left\lVert \sum_{j=1}^{n_i} \lambda_j h_j - f \right\rVert < \varepsilon_i
\quad\text{and}\quad
\|h_j + g\| > 2 - \varepsilon_i
\quad \text{for every } j=1,\dots,n_i,
\tag{1}
\]
for some scalars $\lambda_1,\dots,\lambda_{n_i}$ with
\[
\sum_{j=1}^{n_i} \lambda_j = 1.
\]

We start with $X_1 = \operatorname{span}\{1\}$, the span of the constant function $1$. Suppose that the subspaces $X_1, \dots, X_n$ have already been constructed. Let $v_1, \dots, v_{j_n} \in S_{X_n}$ be an $\varepsilon_n/2$-net in $S_{X_n}$, and let $\{I_m^n\}_{m=1}^{m_n}$ be a partition of $[0,1]$ such that every $f \in S_{X_n}$ is a nonzero polynomial on each interval $I_m^n$. Since $X_n$ is finite-dimensional, we can choose $q \in \mathbb{N}$ large enough such that
\[
\deg\!\bigl(f|_{I_m^n}\bigr) < q
\quad \text{for every } m=1,\dots,m_n \text{ and every } f \in S_{X_n}.
\tag{2}
\]

Now choose $\delta>0$ so small that
\[
\int_B |v_i|\,dx < \varepsilon_n/4,
\]
for every measurable set $B \subset [0,1]$ with $\mathcal{L}(B)<\delta$
and every $i=1,\dots,j_n$.

We take a cover of $[0,1]$,
\[
\bigcup_{j=1}^{m_{n+1}} I^{n+1}_j = [0,1],
\]
such that each closed interval $I^{n+1}_j$ is contained in some $I^n_m$, 
$\mathcal{L}(I^{n+1}_j) < \delta$ for every $j$, 
and the intervals $I^{n+1}_j$ intersect only at their endpoints.

Denote
\[
\tilde{w}_{i,j}
:=
\frac{v_i \chi_{I^{n+1}_j}}
{\left\| v_i \chi_{I^{n+1}_j} \right\|}.
\]
Then we have
\begin{align*}
\| \tilde{w}_{i,j} + v_r \| 
&= \int_{I_j^{n+1}} \lvert\tilde{w}_{i,j} + v_r\rvert \, dx + \int_{[0,1] \setminus I_j^{n+1}} \lvert v_r \rvert \, dx \\
&> \int_{I_j^{n+1}} \lvert\tilde{w}_{i,j}\rvert \, dx - \int_{I_j^{n+1}} \lvert v_r \rvert \, dx + 1 - \varepsilon_n/4 \\
&> \int_{I_j^{n+1}} \lvert\tilde{w}_{i,j}\rvert \, dx + 1 - \varepsilon_n/2 \\
&> 2 - \varepsilon_n/2,
\tag{3}
\end{align*}
for every $ r,i \in \{1,\dots,j_n\} \text{ and } j \in \{1,\dots,m_{n+1}\}$.

Moreover, the following decomposition holds:
\[
\sum_{j=1}^{m_{n+1}}
\| v_i \chi_{I^{n+1}_j} \|
\, \tilde{w}_{i,j}
= v_i
\quad \text{for every } i \in \{1,\dots,j_n\}.
\tag{4}
\]

Define
\[
w_{i,j}
:=
\frac{\tilde{w}_{i,j} + p\, x^{q+(i-1)m_{n+1}+j}}
{\left\|
\tilde{w}_{i,j} + p\, x^{q+(i-1)m_{n+1}+j}
\right\|},
\]
where $q$ is chosen as in (2) and $p > 0$ is sufficiently small such that (3) subsists for the new family $w_{i,j}$:
\[
\| w_{i,j} + v_r \| > 2 - \varepsilon_n/2
\quad \text{for every } r,i \in \{1,\dots,j_n\}
\text{ and } j \in \{1,\dots,m_{n+1}\},
\]
and (4) gets into
\[
\left\|
\sum_{j=1}^{m_{n+1}}
\| v_i \chi_{I^{n+1}_j} \|
\, w_{i,j}
- v_i
\right\|
<
\frac{\varepsilon_n}{2},
\]
for every $i$. Because of the way we chose $q$, we note that $w_{i,j}$ is a nonzero polynomial on every interval $I^{n+1}_j$. Now define
\[
X_{n+1}
:=
\operatorname{span}\Bigl(
X_n,
\{\, w_{i,j} \mid i=1,\dots,j_n,\; j=1,\dots,m_{n+1} \}
\Bigr).
\]
Obviously, $X_n \subset X_{n+1}$. With the partition
$\bigcup_{j=1}^{m_{n+1}} I^{n+1}_m = [0,1]$, every
$f \in X_{n+1}$ is a polynomial on each $I^{n+1}_j$, since each
$I^{n+1}_j$ is contained in some $I^n_{m}$. Property (1) follows from the last two inequalities and the fact that
$(v_i)$ is an $\varepsilon_n/2$-net of $S_{X_n}$. 

It remains to show that every $f \in S_{X_{n+1}}$ is nonzero on each $I^{n+1}_j$.
Suppose, by contradiction, that there exists
\[
f
=
x_n
+
\sum_{i,j} \lambda_{i,j} w_{i,j}
\in S_{X_{n+1}}
\]
which vanishes on, without loss of generality, $I^{n+1}_1$, where
$x_n \in S_{X_n}$.

By construction, each $w_{i,j}$ has higher degree than every element in $X_n$, and
$w_{u,v}$ has higher degree than $w_{s,t}$ whenever
$u>s$ or $u=s$ and $v>t$. Since $f$ vanishes on $I^{n+1}_1$ and hence at
infinitely many points, all coefficients $\lambda_{i,j}$ must be zero.
Indeed, otherwise $f$ would contain a nontrivial highest-degree term and
could not vanish at infinitely many points.

Thus we obtain $f=x_n \in S_{X_n}$.
By the induction hypothesis, such a function cannot vanish identically on
$I^{n+1}_1$, since this interval is contained in some
$I^n_m$, and vanishing on $I^{n+1}_1$ would imply infinitely many zeros
on $I^n_m$, hence $f\equiv 0$ on $I^n_m$, a contradiction. 

Now define $X := \bigcup_{i=1}^\infty X_i$. By property (1), the space $X$ has the Daugavet property. 

Let $f \in S_X$. Then $f \in S_{X_n}$ for some $n$, and hence there exists a finite partition of $[0,1]$ such that $f$ is a polynomial on each subinterval. Since $f$ is a nonzero polynomial on each of these subintervals, its set of zeros has Lebesgue measure zero, i.e.,
\[
\mathcal{L}(\{x \in [0,1] \mid f(x)=0\})=0.
\]
Therefore, by the remark preceding the Theorem, $f$ is a smooth point.
\end{proof}

Note that the completion of this normed space is not smooth. Moreover, it is not even known whether every closed subspace of $L^1[0,1]$ with the Daugavet property contains a non-smooth point. Similarly, it remains open whether there exists a strictly convex Banach space with the Daugavet property. In fact, V.~Kadets constructed a non-complete normed space that is strictly convex and possesses the Daugavet property in \cite{zbMATH00911877}.

\section{Poor subspaces}
We now turn to the study of poor subspaces. Let $\{\lambda_i\}_{i=1}^{\infty}$ be an increasing sequence of positive real numbers. Moreover, let $M(\Lambda) = \{x^{\lambda_i}\}_{i = 1}^{\infty}$ be the sequence of the functions $x^{\lambda_i}$ on $[0,1]$ and let $\overline{\operatorname{span}}^EM(\Lambda)$ be the closed linear span of $M(\Lambda)$ in a given Banach space $E$ containing $M(\Lambda)$, which we call a Müntz space. A famous theorem of C.~Müntz \cite{muntz1} states that for $E = C[0,1]$ or $E = L^p$, $1 \leq p < \infty$, we have
\[
\overline{\operatorname{span}}^EM(\Lambda) \neq E \quad \text{if and only if} \quad \sum_{k=1}^{\infty} \frac{1}{\lambda_k} < \infty.
\]
The next theorem shows that a mild condition on $\Lambda$ implies the poverty of the corresponding Müntz space in $C[0,1]$. The dual of $C[0,1]$ will be identified with the Banach space $M[0,1]$ of Radon measures on $[0,1]$, where $\lvert \mu \rvert$ denotes the variation of $\mu \in M[0,1]$ and $\|\mu\| = \lvert \mu \rvert([0,1])$ denotes the total variation.

\begin{theorem}
Let $\Lambda = \{\lambda_i\}_{i=1}^\infty$ be a strictly increasing sequence of positive real numbers with $\lambda_i \rightarrow \infty$. Then the subspace $Z :=\overline{\operatorname{span}}\{1, x^{\lambda_1}, x^{\lambda_2}, \dots\}$ is non-poor in $C[0,1]$.
\end{theorem}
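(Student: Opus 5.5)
We will show that $Z$ fails condition~(4) of Theorem~1.3, and conclude by the equivalence (1)$\Leftrightarrow$(4) of that theorem that $Z$ is not poor. All choices are made at the point $1$: put
\[
x := \mathbf{1}\ (\text{the constant function}), \qquad e^* := \delta_1, \qquad x^* := -\delta_1,
\]
and fix $\varepsilon \in (0,1/3)$. Then $x \in S_{C[0,1]}$, $e^*, x^* \in S_{M[0,1]}$ and $e^*(x) = 1 > 1-\varepsilon$. Assume, towards a contradiction, that some $v^* = \mu \in B_{M[0,1]}$ satisfies the three conditions in~(4).

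\emph{Step 1: $\mu$ has small mass at $1$.} Write $a := \mu(\{1\})$, so $|a| + |\mu|([0,1)) = \|\mu\| \le 1$. Then
\[
\|x^* + \mu\| = |a - 1| + |\mu|([0,1)) \le (1 - a) + (1 - |a|).
\]
Since $|a| \le 1$, the right-hand side equals $2 - 2a$ if $a \ge 0$ and $2$ if $a < 0$. So the requirement $\|x^* + \mu\| > 2 - \varepsilon$ forces $a < \varepsilon/2$.

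\emph{Step 2: $\mu$ must nearly agree with $\delta_1$ on the functions $x^{\lambda_i}$.} Each $g_i(t) := t^{\lambda_i}$ lies in $Z$ and has $\|g_i\|_\infty = 1$ and $\delta_1(g_i) = 1$. The condition $\|(e^* - \mu)|_Z\| < \varepsilon$ therefore gives $\operatorname{Re}\mu(g_i) > 1 - \varepsilon$ for every $i$.

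\emph{Step 3: pass to the limit.} Since $\lambda_i \to \infty$, the functions $g_i$ converge pointwise on $[0,1]$ to $\chi_{\{1\}}$ and satisfy $0 \le g_i \le 1$. Dominated convergence with respect to $|\mu|$ then yields $\mu(g_i) \to \mu(\{1\}) = a$. Combining this with Steps~1 and~2 gives
\[
1 - \varepsilon \le \operatorname{Re} a \le a < \varepsilon/2,
\]
which is impossible for $\varepsilon < 1/3$. Hence no such $v^*$ exists, condition~(4) fails, and $Z$ is non-poor.

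The condition $v^*(x) > 1 - \varepsilon$ is not even needed in this argument. The only delicate point is Step~1: one has to pick the sign of $x^*$ correctly. With $x^* = +\delta_1$ the norm $\|\delta_1 + \mu\|$ can equal $2$ regardless of the atom at $1$, so no conclusion follows. With $x^* = -\delta_1$ the Daugavet-type condition rules out any substantial mass at $1$, while $\lambda_i \to \infty$ forces the restriction $\mu|_Z$ to see exactly that mass. The hypothesis that $1 \in Z$ is not used beyond $Z$ containing all the $x^{\lambda_i}$.
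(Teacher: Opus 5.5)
Your proof is correct and follows the paper's argument: the same test functionals $x^* = -\delta_1$, $e^* = \delta_1$, and dominated convergence along $x^{\lambda_i} \to \chi_{\{1\}}$ to force the atom $v^*(\{1\})$ up to at least $1-\varepsilon$, played off against $\|v^* - \delta_1\| > 2-\varepsilon$. The only differences are cosmetic. You bound the atom above by $\varepsilon/2$ via the norm condition first, whereas the paper first gets $v^*(\{1\}) \ge 1-\varepsilon$ and then shows $\|v^*-\delta_1\| \le 2\varepsilon$. Also, like the paper's argument, your Step~1 tacitly uses real scalars (since $|a-1| \le 1-a$ needs $a$ real), so the $\operatorname{Re}$ decorations are superfluous.
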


\begin{proof}
Suppose that $Z$ is poor in $C[0,1]$, i.e., it satisfies condition~(4) of Theorem~1.3. Then, for $x^* = -\delta_1$, $e^* = \delta_1$, and $\varepsilon > 0$, there exists $v^* \in B_{M[0,1]}$ such that
\[
\|v^* - \delta_1\| > 2 - \varepsilon, \quad 
\|(\delta_1 - v^*)|_Z\| < \varepsilon. \tag{1}
\]

Since $x^{\lambda_i} \to \chi_{\{1\}}$ pointwise on $[0,1]$, the dominated convergence theorem yields
\[
\left| \int_0^1 x^{\lambda_i} \, d\delta_1(x) - \int_0^{1} x^{\lambda_i}\, dv^*(x) \right|
\;\longrightarrow\;
\left|1 - v^*(\{1\})\right|.
\]
By the second inequality in~(1), we obtain $1 - v^*(\{1\}) \le \varepsilon$ and hence 
\[
|v^*|\bigl([0,1]\setminus\{1\}\bigr) 
= |v^*|\bigl([0,1]\bigr) - |v^*(\{1\})| 
\leq 1 - (1 - \varepsilon) = \varepsilon.
\]
From the first inequality in~(1), we deduce
\[
2\varepsilon \ge \left| v^*(\{1\}) - 1 \right| + |v^*|\bigl([0,1]\setminus\{1\}\bigr)
\ge \|v^* - \delta_1\| > 2 - \varepsilon.
\]
Choosing $\varepsilon > 0$ sufficiently small, we obtain a contradiction.
\end{proof}

\begin{definition}
Let $\Lambda = \{\lambda_i\}_{i=1}^\infty$ be a strictly increasing sequence of positive real numbers. We say that $\Lambda$ is quasilacunary if there exists a strictly increasing sequence of indices $(i_k)$ and a constant $q > 1$ such that
\[
\inf_k \frac{\lambda_{i_{k+1}}}{\lambda_{i_k}} \ge q \quad \text{and} \quad \sup_k (i_{k+1} - i_k) < \infty.
\]
\end{definition}

\begin{corollary}
There exists a non-poor subspace of $C[0,1]$ that is isomorphic to $c_0$.
\end{corollary}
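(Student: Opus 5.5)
The plan is to combine the preceding theorem, which shows that $Z=\overline{\operatorname{span}}\{1,x^{\lambda_1},x^{\lambda_2},\dots\}$ is non-poor in $C[0,1]$ for \emph{every} strictly increasing $\Lambda$ with $\lambda_i\to\infty$, with a known structural result on Müntz spaces. I will take $\Lambda$ lacunary, say $\lambda_i = 2^i$. This is quasilacunary with $i_k=k$ and $q=2$, and it satisfies $\sum 1/\lambda_i<\infty$, so by Müntz's theorem $Z\neq C[0,1]$. The preceding theorem then immediately gives that $Z$ is not poor. It remains only to check that $Z$ is isomorphic to $c_0$.

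For this I would invoke the classical theorem of Gurariy--Macaev (lacunary case) and its extension to quasilacunary sequences (Gurariy--Lyubich, Schwartz, and the monograph of Gurariy--Lusky on Müntz spaces). For $\Lambda$ lacunary, the normalized monomials $x^{\lambda_i}$ form a basis of $\overline{\operatorname{span}}^{C[0,1]}M(\Lambda)$ equivalent to the unit vector basis of $c_0$. More generally, quasilacunarity of $\Lambda$ together with $\sum 1/\lambda_i<\infty$ implies $\overline{\operatorname{span}}^{C[0,1]}M(\Lambda)\simeq c_0$. In the lacunary case one can even sketch this directly. The upper estimate $\|\sum a_i x^{\lambda_i}\|_\infty\lesssim \max|a_i|$ comes from the fact that, at any point $x$, the values $x^{2^i}$ decay doubly exponentially once $x^{2^i}\le 1/2$, and are close to $1$ before that. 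The matching lower estimate comes from evaluating at points $x_k=2^{-1/2^k}$ and using near-orthogonality of the peaks. I will cite this rather than reprove it.

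Finally, adding the constant $1$ changes $\overline{\operatorname{span}}M(\Lambda)$ only by a one-dimensional subspace. Since $1\notin \overline{\operatorname{span}}M(\Lambda)$ (all its elements vanish at $0$), $Z$ is the direct sum of a copy of $c_0$ and a line. Hence $Z\simeq c_0\oplus\mathbb{R}\simeq c_0$, because $c_0$ is isomorphic to its hyperplanes.

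The only real obstacle is the isomorphism with $c_0$, which relies on the cited Müntz-space theory rather than anything proven in this paper. The rest follows directly from the preceding theorem, whose hypotheses hold for any increasing unbounded $\Lambda$.
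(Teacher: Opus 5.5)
Your overall argument is the paper's: apply the preceding theorem to a quasilacunary $\Lambda$ (your $\lambda_i=2^i$ is a special case) and cite the Gurariy--Lusky theory for the isomorphism with $c_0$. Your observation that $1\notin\overline{\operatorname{span}}M(\Lambda)$, so that $Z\simeq\overline{\operatorname{span}}M(\Lambda)\oplus\mathbb{R}\simeq c_0$, is a correct detail that the paper leaves implicit.

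However, the form in which you state the cited result is false, and your sketch would not work. The monomials $x^{\lambda_i}$ already have sup-norm $1$, and $\bigl\|\sum_{i=1}^N x^{\lambda_i}\bigr\|_\infty=N$ (attained at $x=1$), so they are not equivalent to the unit vector basis of $c_0$. The terms with $x^{2^i}$ close to $1$ that you mention add up; they are not controlled by $\max|a_i|$. Also, monomials are increasing on $[0,1]$ and have no separated peaks; those belong to the differences $x^{\lambda_k}-x^{\lambda_{k+1}}$. Equivalence with a unit vector basis is the $L^p$ statement, where the $L^p$-normalized monomials behave like the $\ell^p$ basis.

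In $C[0,1]$ the picture is different. Abel summation gives $\bigl\|\sum_{i\le N}a_ix^{\lambda_i}\bigr\|_\infty\le\max_{m\le N}\bigl|\sum_{i\le m}a_i\bigr|$ for every increasing $\Lambda$. For lacunary $\Lambda$, the Gurariy--Macaev basis property gives the reverse inequality up to a constant: evaluate the uniformly bounded partial-sum projections at $x=1$. So the monomials behave like the summing basis, and $\overline{\operatorname{span}}M(\Lambda)\simeq c\simeq c_0$. With this correction, or by simply citing the isomorphism $Z\simeq c_0$ as the paper does (and as your ``more generally'' sentence already does), your proof is complete.
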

\begin{proof}
Let $\Lambda = \{\lambda_i\}_{i=1}^\infty$ be quasilacunary. Then $\lambda_i \to \infty$ and hence $Z := \overline{\operatorname{span}}\{1, x^{\lambda_1}, x^{\lambda_2}, \dots\}$ is non-poor. By \cite[Corollary~9.3.2]{muntz}, $Z$ is isomorphic to $c_0$.
\end{proof}

\begin{corollary}
There exists a closed subspace $Z$ of $C[0,1]$ with separable dual such that $C[0,1]/Z$ does not enjoy the Daugavet property.
\end{corollary}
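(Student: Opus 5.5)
We take the space $Z$ from the preceding corollary: choose a quasilacunary $\Lambda$, for instance $\lambda_i = 2^i$. Then
\[
Z := \overline{\operatorname{span}}\{1, x^{\lambda_1}, x^{\lambda_2}, \dots\} \subset C[0,1]
\]
is isomorphic to $c_0$ by \cite[Corollary~9.3.2]{muntz}. Since $c_0^*\cong\ell^1$ is separable and separability of the dual passes through isomorphisms, $Z^*$ is separable. This part is routine.

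It remains to show that $C[0,1]/Z$ fails the Daugavet property. By the theorem above, $Z$ is not poor. Definition~1.2 then gives a closed subspace $\tilde Z\subset Z$ with $C[0,1]/\tilde Z\notin \mathrm{DPr}$. We then replace $Z$ by this $\tilde Z$. The subspace $\tilde Z$ is a closed subspace of a space isomorphic to $c_0$, so $\tilde Z^*$ is a quotient of $Z^*$ (restriction map, Hahn--Banach), and a quotient of a separable space is separable. So $\tilde Z$ is a closed subspace with separable dual whose quotient lacks the Daugavet property, as required.

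The one point needing care is that the subspace produced by non-poverty is only guaranteed to exist abstractly. Theorem~1.3(2) says it can even be taken of codimension at most $2$ in $Z$. Either way, heredity of separable duals to closed subspaces handles it, so no explicit identification of $\tilde Z$ is needed.

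Alternatively, one could try to show directly that $C[0,1]/Z\notin\mathrm{DPr}$. One would use the functional $\delta_1$ and the slice computation from the proof of the previous theorem: every functional annihilating $Z$ must put almost no mass near $1$ relative to $\delta_1$-type testing. But the abstract route above avoids this and is the plan I would follow.
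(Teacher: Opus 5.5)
Your proof is correct and is the paper's argument, run directly rather than by contradiction. The paper assumes every separable-dual subspace has a Daugavet quotient and concludes that the Müntz space $Z\cong c_0$ would then be poor; this tacitly uses the heredity of separable duals to closed subspaces, which you make explicit via the surjective restriction map $Z^*\to\tilde Z^*$. (Your closing ``alternative'' about $C[0,1]/Z$ itself is unnecessary and is not something the paper establishes, since non-poverty only guarantees a bad quotient by some closed subspace of $Z$.)
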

\begin{proof}
Suppose that $C[0,1]/Y$ has the Daugavet property for every closed subspace $Y \subset C[0,1]$ with separable dual. Then every closed subspace of $C[0,1]$ with separable dual is poor in $C[0,1]$, which contradicts the previous corollary.
\end{proof}

Note that this does not hold for $L^1[0,1]$, since every closed subspace $Z$ without isomorphic copies of $\ell^1$ is reflexive by a classical result of Kadets and Pe\l{}czy\'nski ~\cite[Theorem~8]{pel}, and therefore $L^1[0,1]/Z$ has the Daugavet property by Shvydkoy's result. In the next theorem, we show that quasilacunary Müntz spaces are non-poor in $C[0,1]$ only because one can choose the functional $e^*$ in Theorem~1.3 to have atoms.
\begin{theorem}
Let $Y \subset C[0,1]$ be a subspace isomorphic to $c_0$. Then, for every $x^*, e^* \in S_{M[0,1]}$ with $e^*$ having no atoms, every $\varepsilon > 0$, and every $f \in S_{C[0,1]}$ satisfying $e^*(f) > 1 - \varepsilon$, there exists $v^* \in B_{M[0,1]}$ such that
\[
v^*(f) > 1 - \varepsilon, \quad 
\|x^* + v^*\| > 2 - \varepsilon, \quad 
\|(e^* - v^*)|_Y\| < \varepsilon.
\]
\end{theorem}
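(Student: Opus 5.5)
The plan is to exploit the Schur property of $Y^*\cong\ell^1$. A weakly null sequence in $Y^*$ is norm null. So it suffices to build a sequence $(v_n^*)\subset B_{M[0,1]}$ with
\[
v_n^*(f) > 1-\varepsilon,\qquad \|x^*+v_n^*\|>2-\varepsilon \quad\text{for all } n,
\]
and such that $(v_n^*-e^*)|_Y$ is weakly null in $Y^*$. Schur then gives $\|(e^*-v_n^*)|_Y\|<\varepsilon$ for large $n$, which is the third requirement.

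\textbf{Step 1 (describing $Y^{**}$).} I will identify $Y^{**}$ concretely. Since $Y$ is separable and contains no copy of $\ell^1$, the Odell--Rosenthal theorem says every $y^{**}\in Y^{**}$ is the weak$^*$ limit of a bounded sequence $(y_k)\subset Y$. Viewing $Y^{**}\subset C[0,1]^{**}$, the pointwise limit $h=\lim_k y_k$ exists and is a bounded Baire-one function on $[0,1]$. By dominated convergence, $y^{**}(\mu|_Y)=\int h\,d\mu$ for every $\mu\in M[0,1]$. Consequently, $(v_n^*-e^*)|_Y$ is weakly null as soon as
\[
\int h\,dv_n^*\to\int h\,de^*\quad\text{for every bounded Borel function } h.
\]
This is setwise convergence $v_n^*(B)\to e^*(B)$ for all Borel $B$, uniformly bounded.

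\textbf{Step 2 (the construction).} This is where atomlessness of $e^*$ enters, and it is the main obstacle. I first reduce to the case where $x^*$ and $e^*$ live on a common atomless part, treating atoms of $x^*$ separately: at an atom of $x^*$ one can place a small point mass near it, which costs little setwise. Let $\nu=|x^*|+|e^*|$. Using atomlessness of $|e^*|$, I choose Borel sets $A_n$ that are ``asymptotically independent'' of every Borel set, i.e. $|e^*|(A_n\cap B)\to\tfrac12|e^*|(B)$ for all Borel $B$. Such sets come from Rademacher-type functions on the measure algebra of $|e^*|$. The first candidate is
\[
v_n^* = 2\,e^*|_{A_n},
\]
which converges setwise to $e^*$ and has norm $2|e^*|(A_n)\to 1$ (rescale slightly to land in $B_{M[0,1]}$). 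It keeps $v_n^*(f)\to e^*(f)>1-\varepsilon$.

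To force $\|x^*+v_n^*\|$ close to $2$, the sets must also be chosen relative to $x^*$. I would look for a sign pattern in which $v_n^*$ is modulated against $\operatorname{sign}(dx^*/d\nu)$ on small disjoint pieces, while its setwise averages still converge to $e^*$. The mechanism I would try first is the one behind the Daugavet property of $C[0,1]$: split $[0,1]$ into pieces of small $\nu$-measure, and on each piece move the $e^*$-mass onto a subset where $x^*$ has the matching sign and positive density. The fine splitting keeps the setwise error small. If $x^*$ vanishes where $e^*$ lives, I instead add a small multiple of a measure that oscillates ever faster, so it is setwise null but aligned with $x^*$.

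Securing both $\|x^*+v_n^*\|>2-\varepsilon$ and setwise convergence simultaneously is the step I expect to require the most care. A fallback is to use the Daugavet property of $C[0,1]$ in the form of Theorem~1.1(2), applied inside $L^1(\nu)$, to produce the aligned piece.

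\textbf{Step 3 (conclusion).} Once the sequence exists, Step 1 and the Schur property give $\|(e^*-v_n^*)|_Y\|\to0$. The other two inequalities hold for all large $n$ by construction, so choosing $n$ large yields the required $v^*=v_n^*$.
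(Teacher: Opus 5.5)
\textbf{There is a genuine gap in Step~2.} Your reduction in Steps~1 and~3 is sound, and it is a genuinely different route from the paper's. Bounded setwise convergence $v_n^*\to e^*$ does make $(v_n^*-e^*)|_Y$ weakly null, and the Schur property of $Y^*\cong\ell^1$ upgrades this to norm convergence. (Odell--Rosenthal is more than you need for $Y\cong c_0$, but it is harmless.) The paper never passes through weak convergence. It fixes a $c_0$-basis $(f_i)$ of $Y$ and discards the set $C_l$ where the tail $\sum_{i\ge l}|f_i|$ exceeds $\delta$, which has small $|e^*|$-measure. It then uses uniform continuity of $f,f_1,\dots,f_{l-1}$ on a fine dyadic partition. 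Finally it takes the finitely supported $v^*=\sum_p e^*(B_p)\delta_{x_p}$, with the $x_p$ outside $C_l$ and off the large atoms of $x^*$. A sequence of purely atomic measures can never converge setwise to a nonzero atomless measure, so your route must produce a different kind of functional. If completed, it would use about $Y$ only that $Y^*$ has the Schur property.

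Step~2 carries all the content and is left open, and the mechanism you sketch does not work. Take $e^*=\mathcal{L}$ and $x^*=-\mathcal{L}$. There is no set on which $x^*$ has the sign of $e^*$, and your first candidate gives $\|x^*+2e^*|_{A_n}\|=\int_0^1|2\chi_{A_n}-1|\,d\mathcal{L}=1$. Your side remarks are also off. Atoms of $x^*$ need no treatment, since they are $|e^*|$-null and any $v_n^*\ll|e^*|$ does not see them. Point masses, by contrast, are incompatible with setwise convergence to an atomless $e^*$ unless they vanish in the limit. And if $x^*\perp e^*$, then $v^*=e^*$ already works.

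What is missing is to spread the mass of $e^*$ thinly instead of aligning signs. Let $D$ be the countable set of atoms of $x^*$. Let $\nu_c$ be the restriction of $|x^*|+|e^*|$ to $[0,1]\setminus D$, an atomless measure carrying $e^*$. Fix $K>2/\varepsilon$ and choose Borel sets $E_n\subset[0,1]\setminus D$ with $\chi_{E_n}\to 1/K$ weak$^*$ in $L^\infty(\nu_c)$. For instance, take $E_n=F^{-1}(\{s:\ ns/\|\nu_c\|\bmod 1<1/K\})\setminus D$ with $F(t)=\nu_c([0,t])$, which pushes $\nu_c$ forward to Lebesgue measure on $[0,\|\nu_c\|]$. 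Then $v_n^*:=K\,e^*|_{E_n}$ converges setwise to $e^*$, $\|v_n^*\|\to 1$, $v_n^*(f)\to e^*(f)$, and
\[
\|x^*+v_n^*\|\ \ge\ \|x^*\|+\|v_n^*\|-2|x^*|(E_n)\ \longrightarrow\ 2-\tfrac{2}{K}\,|x^*|([0,1]\setminus D)\ \ge\ 2-\tfrac{2}{K}\ >\ 2-\varepsilon .
\]
After dividing by $\max(1,\|v_n^*\|)$, this gives exactly the sequence your Steps~1 and~3 require.
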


\begin{proof}
Let $x^*, e^* \in S_{M[0,1]}$ with $e^*$ having no atoms, $\varepsilon > 0$ and $f \in S_{C[0,1]}$ such that $e^*(f) > 1- \varepsilon$. Let $\{f_n\}_{n \in \mathbb{N}} \subset Y$ be a sequence such that $Y= \overline{\operatorname{span}}\{f_n \mid n \in \mathbb{N}\}$ and that is $C$-equivalent to the canonical basis $\{e_n\}_{n \in \mathbb{N}}$ of $c_0$, i.e.,
\[
\frac{1}{C} \sup_{n} |a_n| \;\le\; \Biggl\|\sum_{n=1}^{N} a_n f_n \Biggr\|_\infty \;\le\; C \sup_{n} |a_n| \tag{1}
\]
for all $N \in \mathbb{N}$ and scalars $a_1, \dots, a_N \in \mathbb{R}$. In particular,
\[
\frac{1}{C} \le \mathcal{k}f_n\mathcal{k} \le C \quad \text{for every } n \in \mathbb{N}.
\]

Let $\delta > 0$ and, for each $m \in \mathbb{N}$, define
\[
C_m := \Bigl\{ z \in [0,1] \;\Big|\; \sum_{i = m}^\infty |f_i(z)| > \delta \Bigr\}.
\]
Clearly, the sets satisfy $C_{m+1} \subset C_m$. Suppose that there exists $y \in \bigcap_{m=1}^\infty C_m$. Then we would have
\[
\sum_{i=1}^\infty |f_i(y)| = \infty,
\]
which contradicts~(1). Hence,
\[
\bigcap_{m=1}^\infty C_m = \emptyset,
\]
and we may choose some $l \in \mathbb{N}$ such that
\[
\lvert e^* \lvert(C_l) < \delta \quad \text{and} \quad \sum_{i=l}^\infty |e^*(f_i)| < \delta. \tag{2}
\]
Choose $k \in \mathbb{N}$ sufficiently large so that for every $j \in \{0, \ldots, 2^k - 1\}$ and every $i \in \{1, \ldots, l-1\}$,
\[
|f_i(x) - f_i(x')| + |f(x) - f(x')| < \frac{\delta}{l} 
\quad \forall\, x, x' \in \Bigl[\frac{j}{2^k}, \frac{j+1}{2^k}\Bigr). \tag{3}
\]

Let $A$ denote the union of those intervals $[j/2^k, (j+1)/2^k)$ for which $[j/2^k, (j+1)/2^k) \setminus C_l$ is finite. Since $e^*$ has no atoms, it follows from~(2) that $\lvert e^*\lvert(A) < \delta$.

Let us enumerate the remaining intervals in the partition according to their left endpoints. More precisely, let $B_1$ denote the interval $[j/2^k, (j+1)/2^k)$ with the smallest $j$ such that $[j/2^k, (j+1)/2^k) \setminus C_l$ is infinite, $B_2$ the interval with the second smallest such $j$, and so on. In this way, we obtain a finite set of intervals $\{B_p\}^r_{p =1}$ with $r \leq 2^k-1$. For each $p$, choose a point $x_p \in B_p \setminus C_l$ such that 
\[
|x^*(\{x_p\})| < |e^*|(B_p) \, \delta.
\] 
Define
\[
v^* := \sum_{p=1}^{r} e^*(B_p) \, \delta_{x_p}.
\]

Obviously, $\mathcal{k}v^*\mathcal{k} = \sum_{p=1}^r |e^*(B_p)| \le 1$, and
\begin{align*}
|(v^* - e^*)(f)| &\le \lvert e^*\lvert(A) + \sum_{p=1}^r \int_{B_p} |f(x_p) - f(x)| \, d|e^*|(x) \\
&\le \delta + \delta \sum_{p=1}^r \lvert e^* \lvert(B_p) = 2\delta.
\end{align*}
We obtain $v^*(f) > e^*(f)- 2\delta$ and therefore $\mathcal{k}v^*\mathcal{k} > e^*(f)- 2\delta$.
Using this, we estimate
\begin{align*}
\|v^* + x^*\|
&= |x^*|([0,1] \setminus \{x_1, \dots, x_r\}) + |v^* + x^*|(\{x_1, \dots, x_r\}) \\
&> 1 - \delta + \sum_{p=1}^{r} \left(|e^*(B_p)| - \delta|e^*|(B_p)\right) \\
&> 1 - \delta + \left(e^*(f)- 3\delta\right). \tag{4}
\end{align*}
Since $\{x_1, \dots, x_r\} \cap C_l = \emptyset$, we have
\begin{align*}
\sum_{i=l}^{\infty} |v^*(f_i)| 
&\leq \sum_{i=l}^{\infty} \sum_{p=1}^{r} \lvert e^*(B_p) \rvert |f_i(x_p)| \\
&= \sum_{p=1}^{r} \lvert e^*(B_p) \rvert \sum_{i=l}^{\infty} |f_i(x_p)| \\
&\leq \delta \sum_{p=1}^{r} |e^*(B_p)| \\
&= \delta.
\end{align*}

Moreover, for the first $l-1$ functions, we estimate
\[
\begin{aligned}
\sum_{i=1}^{l-1} |(v^* - e^*)(f_i)| 
&\le C \, \lvert e^* \lvert(A) + \sum_{i=1}^{l-1} \sum_{p=1}^{r} \int_{B_p} |f_i(x_p) - f_i(x)| \, d|e^* |(x) \\
&\le C \,\delta + \sum_{i=1}^{l-1} \sum_{p=1}^{r} \frac{\delta}{l} \, |e^*|(B_p) \\
&< C \delta + \delta.
\end{aligned}
\]
Therefore, for an arbitrary $g = \sum_{i=1}^\infty a_i f_i \in S_Y$, we have
\[
|(v^* - e^*)(g)| \le C \Biggl( \sum_{i=1}^{l-1} |(v^* - e^*)(f_i)| + \sum_{i=l}^\infty (|v^*(f_i)| + |e^*(f_i)|) \Biggr) \le C^2\delta+3C\delta .
\]
We recall all inequalities we obtained:
\begin{align*}
\|v\| &\leq 1, \\
v^*(f) &> e^*(f) - 2\delta, \\
\|v^* + x^*\| &> 1 - \delta + \left(e^*(f) - 3\delta\right), \\
|(v^* - e^*)(g)| &\leq C^2\delta + 3C\delta \quad \text{for all } g \in S_Y.
\end{align*}
Since $e^*(f) > 1 - \varepsilon$, we can choose $\delta > 0$ sufficiently small such that $v^*$ satisfies the required properties.
\end{proof}

\begin{theorem}
Let $Z$ be a closed subspace of $C[0,1]$ without isomorphic $\ell^1$-copies. Then $C[0,1]/Z$ has the slice diameter two property.
\end{theorem}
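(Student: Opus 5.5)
Since the dual of $C[0,1]/Z$ is $Z^\perp \subset M[0,1]$, every slice of the unit ball of the quotient has the form
\[
\{[g] \in B_{C[0,1]/Z} \mid \mu(g) > 1-\varepsilon\},
\]
where $\mu \in Z^\perp$ and $\|\mu\| = 1$. The plan is to find two functions $g_1, g_2 \in B_{C[0,1]}$ such that $\mu(g_1)$ and $\mu(g_2)$ are both close to $1$, while $\|[g_1 - g_2]\| = \operatorname{dist}(g_1 - g_2, Z)$ is close to $2$.

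The first step is a Lebesgue decomposition $\mu = \mu_a + \mu_c$ into atomic and atomless parts, and then a case split.

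\textbf{The atomless part is non-negligible.} Take $f \in S_{C[0,1]}$ with $\mu(f) > 1 - \varepsilon/2$, and take $x^* := -e^*$ with $e^* := \mu$ (or its atomless part, renormalized). Then I would imitate the construction in the proof of the preceding Theorem, but now on the primal side. The key input from the hypothesis is Rosenthal's $\ell^1$ theorem: since $Z$ contains no copy of $\ell^1$, every bounded sequence in $Z$ has a weakly Cauchy subsequence. So $Z$ cannot separate the many pieces of $f$ supported on small dyadic intervals. The concrete plan is as follows.
\begin{enumerate}
\item Partition $[0,1]$ finely.
\item Build $g_1 = f$ and $g_2 = f - 2 h$, where $h$ is a sum of disjointly supported peaks $h_j$. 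These peaks are arranged so that $\mu(h)$ is small, which is possible because the atomless mass can be split evenly on each piece, so that $\mu(g_2) \approx \mu(f)$.
\item Arrange also that $\|g_2\| \le 1 + \varepsilon$ and that $f - g_2 = 2h$ attains $\pm 2$ at many points.
\end{enumerate}

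The distance estimate $\operatorname{dist}(2h, Z) \approx 2$ is the main obstacle. If $z \in Z$ with $\|2h - z\| \le 2 - \eta$ for many choices of peak families, then $z$ must be $\ge \eta$ on the many peak points where $h = 1$. I would try to choose infinitely many peak families $h^{(n)}$ whose peaks sit at points forming independent sets, in the Rosenthal sense. The corresponding approximants $z_n \in Z$ then have values behaving like an independent Boolean family on these points, so they span $\ell^1$, contradicting the hypothesis. More precisely: once $\operatorname{dist}(2h^{(n)}, Z) \le 2 - \eta$ for all $n$, Rosenthal's dichotomy yields a weakly Cauchy subsequence $(z_n)$, which converges pointwise. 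This contradicts the sign pattern the $z_n$ are forced to have at the peak points. Here I would use the Bourgain–Fremlin–Talagrand/Rosenthal characterization of pointwise convergence as the precise tool.

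\textbf{$\mu$ is essentially atomic.} Here $\mu \approx \sum a_k \delta_{t_k}$ with $\sum |a_k| \approx 1$. I would put $g_1 = f$ with $f(t_k) = \operatorname{sign} a_k$ for finitely many $t_k$ carrying most of the mass, and $g_2 = f - 2h$, with $h$ a peak family supported away from the $t_k$ but accumulating near them. Both slice values are preserved. The same Rosenthal argument should show $\operatorname{dist}(2h, Z) \approx 2$: it uses only that $Z$ has no $\ell^1$, not any property of $\mu$. Since $\mu \in Z^\perp$ is never used in that estimate, the two cases could likely be merged, with $\mu$ entering only in keeping $\mu(g_2) \approx \mu(g_1)$. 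I expect making the independent-family construction rigorous to be the hardest part.
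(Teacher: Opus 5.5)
Your overall strategy matches the paper's: change $f$ only where $|\mu|$ has little mass, use the diameter bound to get $z\in 4B_Z$ with $z>p$ and $z<-p$ at prescribed points, and assemble such $z$'s into an $\ell^1$-sequence. The atomic/atomless case split is unnecessary. Since $|\mu|$ has at most countably many atoms, there is always an open interval $I$ with $|\mu|(I)$ as small as you like, so all modifications can go into $I$ and no ``even splitting'' of mass is needed.

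Your setup $g_1=f$, $g_2=f-2h$ does not work as written. The conditions $\|f\|\le 1$ and $\|f-2h\|\le 1+\varepsilon$ force $f(t)\ge 1-\varepsilon$ wherever $h(t)=1$, and $f(t)\le -1+\varepsilon$ wherever $h(t)=-1$. With one fixed $f$, every family $h^{(n)}$ must carry the sign of $f$ at each of its peaks. So the sign patterns at common points cannot vary with $n$, which is exactly what independence needs. You must modify both functions, as the paper does: take $g^\pm\in B_{C[0,1]}$ equal to $f$ off $I$, with $g^+(t)=-g^-(t)=\pm1$ at chosen points $t\in I$.

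The genuine gap is the step you defer. The diameter bound constrains $z_n$ only at the points where $h^{(n)}=\pm1$. Independence requires, for every finite sign pattern, one point that is simultaneously a peak of $h^{(1)},\dots,h^{(m)}$ with that pattern. If the families are fixed in advance with finitely many peaks each, then $z_1$ is constrained at only finitely many points, so patterns of length $m$ can be realised only for small $m$. This has two consequences:
\begin{itemize}
\item You obtain at best uniformly embedded $\ell^1_N$'s, which $c_0$ also contains.
\item In your Rosenthal formulation, a diagonal argument gives a subsequence whose constrained values at each of the countably many constrained points are eventually of one sign. A pointwise convergent subsequence then yields no contradiction.
\end{itemize}

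There are two ways to close this.
\begin{itemize}
\item The paper proceeds adaptively. Having found $f_0,\dots,f_m$ and $2^m$ points, it picks neighbourhoods $U_j\subset I$ of these points on which every $f_k$ keeps its sign with $|f_k|>p$. It then applies the diameter bound to $g^\pm$ with one $+$ point and one $-$ point in each $U_j$. Continuity of the already chosen $f_k$ is what makes the constraints interlock, and it doubles the point set at each step.
\item A non-adaptive alternative: fix a Cantor set $K\subset I$ identified with $\{-1,1\}^{\mathbb N}$. By Tietze, take $g_n^\pm$ equal to $f$ off $I$ and to $\pm r_n$ on $K$, where $r_n$ is the $n$-th coordinate.
\end{itemize}
Either way, evaluating at a point where all $a_kz_k$ have the same sign gives $\|\sum_k a_kz_k\|\ge p\sum_k|a_k|$, and $\|z_k\|<4$ gives the upper estimate. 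So neither Rosenthal's theorem nor Bourgain--Fremlin--Talagrand is needed, although your weakly Cauchy argument does go through with the Cantor-set family.
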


\begin{proof}
Suppose that there exist $p > 0$ and a slice $S := S(B_{C[0,1]/Z}, \mu, \varepsilon)$ with 
$\mu \in S_{(C[0,1]/Z)^*} = S_{Z^\perp}$ and $\varepsilon > 0$ such that
\[
\|k - g\|_{C[0,1]/Z} < 2 - p,
\tag{1}
\]
for every $k,g \in S$. Choose an open interval $I \subset [0,1]$ with $\lvert \mu \lvert(I) < \varepsilon/2$ and some $[h] \in B_{C[0,1]/Z}$ with 
\[
\int_{[0,1]} h \, d\mu > 1 - \varepsilon/2,
\]
such that $h \in C[0,1]$ is a 
representative of $[h]$ with $\mathcal{k}h\mathcal{k} \leq 1 $.

For a set of distinct points $x_1^+, \dots, x_n^+, x_1^-, \dots, x_n^- \in I$ let $g^+ \in B_{C[0,1]}$ with 
\[
g^+ = h \text{ on } [0,1] \setminus I, \quad 
g^+(x_i^+) = 1, \quad 
g^+(x_i^-) = -1 \text{ for } i=1,\dots,n.
\]
Let $g^- \in B_{C[0,1]}$ with 
\[
g^- = h \text{ on } [0,1] \setminus I, \quad 
g^-(x_i^+) = -1, \quad 
g^-(x_i^-) = 1 \text{ for } i=1,\dots,n.
\]
Obviously $[g^+], [g^-] \in S$. Therefore by (1) there is an $f \in 4B_{Z}$ such that
\[
\|g^+-g^- - f \| < 2- p.
\]
We immediately obtain
\[
f(x_i^+) > p \quad \text{and} \quad f(x_i^-) < -p \text{ for all } i=1,\dots,n. \tag{2}
\]
The remainder of the proof consists of an inductive construction of a sequence in $Z$ equivalent to the canonical basis of $\ell^1$, where in each step we apply the above construction to select suitable basis vectors.
In the initial step we pick some arbitrary $x_1 \in I$, so by~(2) we can pick $f_0 \in 4B_Z$ with $f_0(x_1) > p$. Suppose we have constructed \( f_0, f_1, \dots, f_m \in 4B_Z \) and points \( x_1, \dots, x_{2^m} \in I \) such that the following holds:
\[
|f_k(x_j)| > p \quad \text{ for all } j = 1, \dots, 2^m,  k = 0, 1, \dots, m,
\]
and for every choice of nonzero scalars \( a_0, \dots, a_m \), there exists an index \( i \in \{1, \dots, 2^m\} \) such that
\[
\operatorname{sign}(a_k\, f_k(x_i))
=
\operatorname{sign}(a_{k'}\, f_{k'}(x_i))
\quad \text{for all } k, k' \in \{0, 1, \dots, m\}. \tag{3}
\]

For each \( j \in \{1, \dots, 2^m\} \), choose an open set \( U_j \subset [0,1] \) containing \( x_j \) such that
\[
|f_k(x)| > p \quad \text{for all } x \in U_j \text{ and all } k = 1, \dots, m.
\]
Using the above construction which leads to (2), we can choose some \( f_{m+1} \in 4B_Z \) such that for every \( j \in \{1, \dots, 2^m\} \), there exist two points \( y^j_w, y^j_e \in U_j \) with
\[
f_{m+1}(y^j_w) > p \quad \text{and} \quad f_{m+1}(y^j_e) < -p.
\]
Let \( b_0, \dots, b_{m+1} \) be nonzero scalars. Then there exists some \( u \in \{0, \dots, 2^m\} \) such that
\[
\operatorname{sign}(b_k\, f_k(x_u))
=
\operatorname{sign}(b_{k'}\, f_{k'}(x_u))
\quad \text{ for all } k, k' \in \{1, \dots, m\}.
\]
Without loss of generality, let $b_1f_1(x_u) > 0$. Suppose $b_{m+1} > 0$. Using that \(f_k\) for \(k \in \{0, \dots, m\}\) does not switch its sign on \(U_u\), we obtain
\[
\operatorname{sign}(b_k\, f_k(y^u_w))= \operatorname{sign}(b_1\, f_1(y_w^u)) = 1 = \operatorname{sign}(b_{m+1}\, f_{m+1}(y_w^u)), 
\]
while for $b_{m+1} < 0$, we would use $y_e^u$ instead of $y_w^u$. We also have $\lvert f_k(y_w^u) \lvert > p$ for every $k \in \{1, \dots m+1\}$. By induction, we obtain a sequence \( (f_k)_{k \in \mathbb{N}} \) such that (3) holds for every finite subset $\{f_k\}_{\leq m}$.
Let $m \in \mathbb{N}$ be arbitrary again where we use the same notation as in (3). Then:
\[
\left\| \sum_{i=0}^{m} a_i f_i \right\|
\ge
 \left| \sum_{i=0}^{m} a_i f_i(x_i) \right| = \left| \sum_{i=0}^{m} \lvert a_if_i(x_i) \lvert \operatorname{sign}(a_{i}f_i(x_i))\right| 
>
p \sum_{i=0}^{m} |a_i|,
\]
The other inequality for $(f_i)$ being equivalent to the canonical $\ell^1$-basis follows from the fact that \( f_i \in 4B_Z \). Therefore we get a contradiction to the assumption of the theorem.
\end{proof}

For $L^\infty[0,1]$ the slice diameter two property is preserved under milder conditions.

\begin{theorem}
Let $Z \subset L^\infty[0,1]$ be a closed separable subspace. Then $L^{\infty}[0,1]/Z$ has the slice diameter two property.
\end{theorem}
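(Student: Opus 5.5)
The approach is a separability (counting) argument. We find two elements of the slice that agree outside a small set $E$ and are opposite on $E$. If no such pair is almost $2$ apart in the quotient, we obtain uncountably many uniformly separated elements of a bounded part of $Z$, which contradicts separability. This parallels the proof of the previous theorem, but separability replaces the $\ell^1$-construction.

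Fix a slice $S:=S(B_{L^\infty[0,1]/Z},\mu,\varepsilon)$ with $\mu\in S_{Z^\perp}\subset (L^\infty[0,1])^*$, where $\mu$ is viewed as a bounded finitely additive signed measure absolutely continuous with respect to $\mathcal L$. Fix $p>0$; we show $\operatorname{diam} S\ge 2-p$. First choose $[h]\in S$ with $\|[h]\|<1$ (shrink slightly) and take a representative $h$ with $\|h\|_\infty\le 1$ and $\mu(h)>1-\varepsilon+\eta$ for some $\eta>0$. Next partition $[0,1]$ into $N$ intervals with $N>4/\eta$. By finite additivity of $|\mu|$, one of them, call it $E$, satisfies $|\mu|(E)<\eta/4$, and $\mathcal L(E)>0$.

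For a measurable $A\subset E$, put $r_A:=\chi_A-\chi_{E\setminus A}$ and define
\[
g_A^{\pm}:=h\chi_{[0,1]\setminus E}\pm r_A .
\]
Then $\|g_A^\pm\|_\infty\le 1$. Moreover $|\mu(g^\pm_A)-\mu(h)|\le 2|\mu|(E)<\eta/2$, so $[g_A^\pm]\in S$. Since $g_A^+-g_A^-=2r_A$, it suffices to find $A$ with
\[
\inf_{f\in Z}\|2r_A-f\|_\infty\ge 2-p .
\]

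Suppose instead that for every $A$ there is $f_A\in Z$ with $\|2r_A-f_A\|_\infty<2-p$. Then $f_A\in 4B_Z$, and almost everywhere we have $f_A>p$ on $A$ and $f_A<-p$ on $E\setminus A$. Identify $E$ with an interval $[a,b]$ and take the uncountable family $A_t:=E\cap[a,t]$ for $t\in(a,b)$. For $s<t$ the set $A_t\setminus A_s$ has positive Lebesgue measure. On that set $f_{A_t}>p$ and $f_{A_s}<-p$ almost everywhere, hence $\|f_{A_t}-f_{A_s}\|_\infty\ge 2p$. Thus $\{f_{A_t}\}$ is an uncountable $2p$-separated subset of the separable space $Z$, which is a contradiction. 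So some $A_t$ works, and $\|[g^+_{A_t}]-[g^-_{A_t}]\|\ge 2-p$ with both points in $S$. Since $p>0$ was arbitrary, every slice has diameter $2$.

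The main obstacle is only technical: handling $\mu$ as a finitely additive measure on $L^\infty$. We need two facts about it. First, one cell of a finite partition has small $|\mu|$-mass, which follows from finite additivity of the variation. Second, the estimate $|\mu(u\chi_E)|\le\|u\|_\infty|\mu|(E)$ holds. Both are standard for $ba$-measures, so this step should cause no real difficulty.
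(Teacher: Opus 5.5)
Your proof is correct and follows essentially the same route as the paper. Both arguments build pairs in the slice that agree with a near-norming representative outside a set of small $|\mu|$-mass and take opposite values $\pm1$ on pieces of that set, then turn the failure of diameter $2$ into an uncountable $2p$-separated family in $Z$, which contradicts separability. The only real difference is how that family is indexed: the paper uses all partitions $N_1\cup N_2=\mathbb{N}$ of a countable decomposition $A=\bigsqcup_i A_i$, while you use the nested sets $E\cap[a,t]$. Your bookkeeping is, if anything, more careful than the paper's: you shrink $[h]$ to get a representative of norm at most $1$, and your choice $|\mu|(E)<\eta/4$ keeps $g_A^\pm$ in the slice, whereas the paper's $|\mu|(A)<\varepsilon/2$ only gives $\mu(h_{N_1,N_2})>1-3\varepsilon/2$.
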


\begin{proof}
We identify $L^\infty[0,1]^*$ with the space of bounded finitely additive signed measures on the Borel $\sigma$-algebra $\mathcal{B}([0,1])$ which are absolutely continuous with respect to the Lebesgue measure $\mathcal{L}$. 
Suppose that there exist $p > 0$ and a slice $S := S(B_{L^\infty[0,1]/Z}, \mu, \varepsilon)$ with $\mu \in S_{Z^\perp}$ and $\varepsilon > 0$ such that
\[
\|k - g\|_{L^\infty[0,1]/Z} < 2 - p,
\tag{1}
\]
for every $k,g \in S$.

Let $A \subset [0,1]$ be a Borel set with $\mathcal{L}(A) > 0$ such that the variation of $\mu$ on $A$ satisfies $|\mu|(A) < \varepsilon/2$. We may decompose $A$ into a disjoint union
\[
A = \bigsqcup_{i=1}^\infty A_i,
\]
where each $A_i \in \mathcal{B}([0,1])$ has strictly positive Lebesgue measure.

Let $[f] \in S$ be such that
\[
\int f \, d\mu > 1 - \varepsilon/2,
\]
where $f \in L^\infty[0,1]$ is a Borel measurable representative satisfying $\|f\|_\infty \leq 1$.

For an arbitrary partition $N_1 \cup N_2 = \mathbb{N}$, define
\[
h_{N_1,N_2}(x) =
\begin{cases}
f(x), & \text{if } x \notin A, \\
1, & \text{if } x \in A_j \text{ and } j \in N_1, \\
-1, & \text{if } x \in A_i \text{ and } i \in N_2,
\end{cases}
\]

\[
q_{N_1,N_2}(x) =
\begin{cases}
f(x), & \text{if } x \notin A, \\
-1, & \text{if } x \in A_j \text{ and } j \in N_1, \\
1, & \text{if } x \in A_i \text{ and } i \in N_2.
\end{cases}
\]
We have $h_{N_1,N_2}, q_{N_1,N_2} \in S$. By (1) there exists $w_{N_1,N_2} \in 4B_Z$ such that
\[
\|h_{N_1,N_2} - q_{N_1,N_2} - w_{N_1,N_2}\|_{L^\infty} < 2 - p,
\]
i.e., $w_{N_1,N_2} > p$ almost everywhere on every $A_j$ with $j \in N_1$, and $w_{N_1,N_2} < -p$ almost everywhere on every $A_i$ with $i \in N_2$. Assume that we have two different partitions of $\mathbb{N}$, say $(N_1,N_2)$ and $(\tilde{N}_1,\tilde{N}_2)$. Then there exists $m \in \mathbb{N}$ such that either
$m \in N_1$ and $m \in \tilde{N}_2$, which implies
\[
|w_{N_1,N_2} - w_{\tilde{N}_1,\tilde{N}_2}| > 2p
\quad \text{almost everywhere on } A_m,
\]
or $m \in N_2$ and $m \in \tilde{N}_1$, in which case we also obtain $|w_{N_1,N_2} - w_{\tilde{N}_1,\tilde{N}_2}| > 2p$ on $A_m$.

Since $p$ is fixed, $Z$ is separable, and the set of all partitions of $\mathbb{N}$ into two subsets is uncountable, we obtain a contradiction.

\end{proof}

\section*{Acknowledgements}
The author expresses his gratitude towards Dirk Werner, whose remarks greatly improved the presentation and quality of this paper, and likewise for the observation that the author's first example of a non-poor space isomorphic to a subspace of $c_0$ can be replaced by the more natural class of Müntz spaces, which under certain conditions are even isomorphic to $c_0$.

\bibliographystyle{plain}
\bibliography{mybib}

\end{document}